\newtheorem{proposition}{Proposition}
\newtheorem{definition}{Definition}
\newtheorem{remark}{Remark}
\newtheorem{theorem}{Theorem}
\newtheorem{lemma}{Lemma}
\newcommand{\ds}{\displaystyle}
\begin{document}

 \title{Analysis of two- and three-dimensional fractional-order Hindmarsh-Rose type neuronal models}

 \author{Eva Kaslik\\
$^1$ Institute e-Austria Timisoara, Bd. V. Parvan nr. 4, room 045B, 300223, Timisoara, Romania\\
$^2$ Dept. of Mathematics and Computer Science, West University of Timisoara, Romania\\
  e-mail: ekaslik@gmail.com
}

\date{}
\maketitle

{\large\textbf{PUBLICATION DETAILS:\\
This paper is now published (in revised form) in\\ Fractional Calculus and Applied Analysis, 20(3): 623-–645, 2017,\\
\href{http://doi.org/10.1515/fca-2017-0033}{DOI:10.1515/fca-2017-0033},\\
and is available online at \url{http://www.degruyter.com/view/j/fca}.}}

\section*{\small{Abstract}}
\begin{small}

A theoretical analysis of two- and three-dimensional fractional-order Hindmarsh-Rose neuronal models is presented, focusing on stability properties and occurrence of Hopf bifurcations, with respect to the fractional order of the system chosen as bifurcation parameter. With the aim of exemplifying and validating the theoretical results, numerical simulations are also undertaken, which reveal rich bursting behavior in the three-dimensional fractional-order slow-fast system.

\medskip

{\it MSC 2010\/}: Primary 26A33;
                  Secondary 33E12, 34A08, 34K37, 35R11, 60G22.

 \smallskip

{\it Key Words and Phrases}: fractional-order, Hindmarsh-Rose, neuron, neuronal activity, stability, Hopf bifurcation, bursting, slow-fast system.
\end{small}

\section{Introduction}

Neuronal activity of biological neurons is typically modeled using the classical Hodgkin-Huxley mathematical model \cite{Hodgkin-Huxley}, dating back to 1952, that includes nonlinear differential equations for the membrane potential and gating variables of ionic currents. Simplified versions of the Hodgkin-Huxley model have been introduced in 1962 by Fitzhugh and Nagumo \cite{FitzHugh} and in 1981 by Morris and Lecar \cite{Morris_Lecar_1981}.

In 1982, Hindmarsh and Rose \cite{Hindmarsh-Rose-1} introduced a different simplification of the original Hodgkin-Huxley model, proposing the following two-dimensional model of neuronal activity:
\begin{equation}\label{eq.HR.2d}
\left\{
  \begin{array}{ll}
    \dot{x} = y-ax^3+bx^2+I \\
    \dot{y} = c-dx^2-y
  \end{array}
\right.
\end{equation}
where $x$ represents the membrane potential in the axon of a neuron and $y$ is a recovery variable, called the \emph{spiking variable}, representing the transport rate of sodium and potassium ions through fast ion channels. The parameters $a$, $b$, $c$ and $d$ are positive ($a=c=1$ is often assumed) and $I$ represents the external stimulus.

Two years later, Hindmarsh and Rose  \cite{Hindmarsh-Rose-2} decided to improve their model by adding a third equation, that takes into account a slow adaptation current $z$. The three-dimensional Hindmarsh-Rose model is described by the following system of three differential equations:
\begin{equation}\label{eq.HR.3d}
\left\{
  \begin{array}{ll}
    \dot{x} = y-ax^3+bx^2+I-z \\
    \dot{y} = c-dx^2-y\\
    \dot{z} =\varepsilon (s(x-x_0)-z)
  \end{array}
\right.
\end{equation}
where $(x_0,y_0)$ are the coordinates of the leftmost equilibrium point of the system without adaptation (\ref{eq.HR.2d}). Here, the variable $z$, called the \emph{bursting variable}, represents the exchange of ions through slow ionic channels. The parameters $\varepsilon$ and $s$ are positive, and $\varepsilon$ is considered to be small.

It has been previously noted that this extra mathematical complexity allows a great variety of dynamic behaviors for the membrane potential $x$, including chaotic dynamics. Therefore, the Hindmarsh-Rose neuron model has a great importance: while still being relatively simple, it allows for a good qualitative description of many different patterns of the action potential observed in experiments.

An important phenomenon in neuron activity is the transition between \emph{spiking}, represented by a generation of action potentials, and \emph{bursting}, represented by a membrane potential changing from resting to repetitive firing state. Bifurcation phenomena correspond to qualitative changes of the information transmitted through the axon of the neuron, determining the transition between a quiescent state and an oscillatory one, or between different kinds of oscillatory behaviors. Hence, bifurcation theory plays an important role in studying the dynamics of system (\ref{eq.HR.3d}). We refer to \cite{Corson_2009,Innocenti_2007,Innocenti_2009,Shilnikov_2008,Storace_2008,Tsuji_2007} for recent results concerning the dynamics of integer-order Hindmarsh-Rose models.

System (\ref{eq.HR.3d}) can be regarded as a slow-fast system, where the fast subsystem is given by (\ref{eq.HR.2d}), whose bifurcation diagram  provides important information about the dynamics and bursting patterns of system (\ref{eq.HR.3d}) when $\varepsilon$ is small enough \cite{Guckenheimer-2011}. Indeed, based on the method of dissection of neuronal bursting \cite{Rinzel}, setting $\varepsilon=0$ in (\ref{eq.HR.3d}) and studying the fast subsystem by treating $z$ as a bifurcation parameter, typically, the fast subsystem exhibits a limit cycle for some values of $z$ and an equilibrium point for other values of $z$. Therefore, as the slow variable $z$ in system (\ref{eq.HR.3d}) oscillates between two values, the whole system will burst.

In this paper, improved versions of the two- and three-dimensional Hindmarsh-Rose models are proposed and analyzed, by replacing the integer-order derivatives by fractional-order Caputo-type derivatives \cite{Kilbas,Podlubny,Lak}. This fractional-order formulation is justified by research results concerning biological neurons. Indeed, the results reported in \cite{Anastasio} suggest that "the oculomotor integrator, which converts eye velocity into eye position commands, may be of fractional order. This order is less than one, and the velocity commands have order one or greater, so the resulting net output of motor and premotor neurons can be described as fractional differentiation relative to eye position". Moreover, in the recent paper \cite{Lundstrom} it has been pointed out that "fractional differentiation provides neurons with a fundamental and general computation ability that can contribute to efficient information processing, stimulus anticipation and frequency-independent phase shifts of oscillatory neuronal firing", emphasizing once again the utility of developing and studying fractional-order mathematical models of neuronal activity.

The main benefit of fractional-order models in comparison with classical integer-order models is that fractional derivatives provide a good tool for the description of memory and hereditary properties of various processes. This is obviously a desired feature when it comes to the modelling of a biological neuron. In fact, fractional-order systems are characterized by infinite memory, as opposed to integer-order systems. The generalization of dynamical equations using fractional derivatives proved to be more accurate in the mathematical modeling of real world phenomena arising from several interdisciplinary areas, such as phenomenological description of viscoelastic liquids \cite{Heymans_Bauwens}, diffusion and wave propagation \cite{Henry_Wearne,Metzler}, colored noise \cite{Cottone}, boundary layer effects in ducts \cite{Sugimoto}, electromagnetic waves \cite{Engheia}, fractional kinetics \cite{Mainardi_1996}, electrode-electrolyte polarization \cite{Ichise_Nagayanagi_Kojima}, etc.

Fractional-order models of Hindmarsh-Rose type have been recently studied in \cite{Min_2012,Min_2013,Jun_2014,Xie_2014}. In \cite{Min_2012,Min_2013}, the author obtains stability and bifurcation results for a two-dimensional modified fractional-order Hindmarsh-Rose neuronal model, and introduces a state feedback method to control the Hopf bifurcation. In \cite{Jun_2014,Xie_2014}, a three-dimensional fractional-order Hindmarsh-Rose model is considered with fixed numerical values of the system parameters, and extensive numerical simulations are carried out to exemplify the dynamical characteristics of the model, without focusing on theoretical analysis. Recently, a fractional-order Morris-Lecar neuron model with fast-slow variables has been investigated in \cite{Shi_2014}, revealing some bursting patterns that do not exist in the corresponding integer-order model.

This paper is devoted to the theoretical analysis of the two- and three-dimensional fractional-order Hindmarsh-Rose neuronal models, focusing on stability properties and occurrence of Hopf bifurcations, choosing the fractional order of the system as bifurcation parameter. The theoretical results are obtained in a general framework, without specifying the numerical values of the system parameters, which is often the case in previously published papers \cite{Jun_2014,Xie_2014}. Numerical simulations are also undertaken, with the aim of exemplifying the theoretical results and revealing bursting behaviour in the three-dimensional model.

\section{Preliminaries on fractional-order differential systems}

In general, three different definitions of fractional derivatives are widely used: the Gr\"{u}nwald-Letnikov derivative, the Riemann-Liouville derivative and the Caputo derivative. These three definitions are in general non-equivalent. However, the main advantage of the Caputo derivative is that it only requires initial conditions given in terms of integer-order derivatives, representing well-understood features of physical situations and thus making it more applicable to real world problems.

\begin{definition}
For a continuous function $f$, with $f'\in L^1_{loc}(\mathbb{R}^+)$, the Caputo fractional-order derivative of order $q\in(0,1)$ of $f$ is defined by
$$
^cD^q f(t)=\frac{1}{\Gamma(1-q)}\int_
0^t(t-s)^{-q}f'(s)ds.
$$
\end{definition}

\begin{remark}
When $q\rightarrow 1$, the fractional order derivative $^cD^q f(t)$ converges to the integer-order derivative $f'(t)$.
\end{remark}

Highly remarkable scientific books which provide the main theoretical tools for the qualitative analysis of fractional-order dynamical systems, and at the same time, show the interconnection as well as the contrast between classical differential equations and fractional differential equations, are \cite{Podlubny,Kilbas,Lak}.

An analogue of the classical Hartman theorem for nonlinear integer-order dynamical systems, the linearization theorem for fractional-order dynamical systems has been recently proved in \cite{Li_Ma_2013}. The following stability result holds for linear autonomous fractional-order systems \cite{Matignon}:

\begin{theorem}
\label{thm.linear.stab}
The linear fractional-order autonomous system
$$^cD^q \bold{x}=A \bold{x}\qquad\textrm{where}~~ A\in\mathbb{R}^{n\times n}$$
where $q\in(0,1)$ is asymptotically stable if and only if
\begin{equation}
\label{eq.lambda.spec}
|\arg(\lambda)|>\frac{q\pi}{2}\qquad\forall \lambda\in\sigma(A)
\end{equation}
where $\sigma(A)$ denotes the spectrum of the matrix $A$ (i.e. the set of all eigenvalues).
\end{theorem}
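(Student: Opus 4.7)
The plan is to obtain an explicit solution of the Cauchy problem by Laplace transform and then analyse its asymptotic behaviour through the Mittag--Leffler function. Applying the Laplace transform to $^cD^q\mathbf{x}=A\mathbf{x}$ and using the identity $\mathcal{L}\{^cD^q f\}(s)=s^q F(s)-s^{q-1}f(0)$ componentwise, I would obtain $(s^q I-A)X(s)=s^{q-1}\mathbf{x}(0)$, so that
$$X(s)=s^{q-1}(s^q I-A)^{-1}\mathbf{x}(0),$$
which, upon termwise inversion of the Neumann series (or equivalently via a Bromwich contour), yields the closed-form representation $\mathbf{x}(t)=E_q(At^q)\mathbf{x}(0)$, where $E_q$ is the matrix Mittag--Leffler function $E_q(M)=\sum_{k\geq 0} M^k/\Gamma(qk+1)$.

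Next I would reduce $A$ to a Jordan form $A=PJP^{-1}$, which turns the asymptotic analysis of $\mathbf{x}(t)$ into the asymptotic analysis of the scalar quantities $E_q(\lambda t^q)$ and their derivatives with respect to $\lambda$ (the latter being needed to control nontrivial Jordan blocks, since $\partial_\lambda^j E_q(\lambda t^q)$ encodes the polynomial-in-$t$ factors appearing in blocks of size $j+1$). The stability problem thereby reduces to determining, for each eigenvalue $\lambda\in\sigma(A)$, whether $t^k E_q(\lambda t^q)\to 0$ as $t\to\infty$ for every $k\geq 0$.

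The key analytic input is the classical asymptotic expansion of $E_q$ valid for $|z|\to\infty$: inside the sector $|\arg z|\leq q\pi/2$ one has $E_q(z)=\frac{1}{q}\exp(z^{1/q})+O(|z|^{-1})$, whereas outside this sector $E_q(z)=-\sum_{k=1}^{N}z^{-k}/\Gamma(1-qk)+O(|z|^{-N-1})$. Setting $z=\lambda t^q$ and tracking $\arg(z)=\arg(\lambda)$, three cases emerge: if $|\arg(\lambda)|>q\pi/2$ then only the algebraic tail survives and $\mathbf{x}(t)=O(t^{-q})\to 0$; if $|\arg(\lambda)|<q\pi/2$ the principal $q$-th root of $\lambda$ has strictly positive real part, producing exponential growth of $E_q(\lambda t^q)$; and if $|\arg(\lambda)|=q\pi/2$ the exponent $\lambda^{1/q}$ is purely imaginary, so $|E_q(\lambda t^q)|$ remains bounded below and asymptotic stability fails. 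Choosing $\mathbf{x}(0)$ to lie in the eigenspace of an offending eigenvalue proves the necessity direction, while the Jordan decomposition together with the decay estimate in the first case proves sufficiency.

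The principal obstacle is the rigorous derivation of the two-sided asymptotics of $E_q$, which requires deforming the Bromwich contour in the inverse Laplace representation
$$E_q(z)=\frac{1}{2\pi i}\int_{\mathcal{H}}\frac{\zeta^{q-1}e^\zeta}{\zeta^q-z}\,d\zeta$$
into a Hankel path and carefully collecting the residues at the $q$-th roots of $z$ that happen to lie inside the deformed contour; the count of such residues is exactly what changes at $|\arg(\lambda)|=q\pi/2$, which is why this angle is the stability boundary. The remaining technical care is needed in the borderline case, where the algebraic tail no longer forces decay and one has to verify that oscillatory non-decaying terms genuinely appear.
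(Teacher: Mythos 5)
The paper does not prove this theorem at all: it is imported verbatim from Matignon's 1996 paper (reference \cite{Matignon} in the bibliography) and used as a black box, so there is no in-paper argument to compare yours against. Judged on its own, your sketch follows the classical route — Laplace transform of the Caputo derivative, the representation $\mathbf{x}(t)=E_q(At^q)\mathbf{x}(0)$, Jordan reduction, and the sectorial asymptotics of the Mittag--Leffler function — which is essentially Matignon's original proof, and the overall architecture is sound.

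Two points need repair before this would stand as a proof. First, the reduction you state, namely that stability amounts to $t^k E_q(\lambda t^q)\to 0$ for every $k\geq 0$, is not the right criterion and is in fact never satisfiable: in the stable sector $E_q(\lambda t^q)$ decays only like $t^{-q}$, so multiplying by $t^k$ destroys convergence for $k\geq 1$, and taken literally your criterion would declare every system unstable. The terms actually produced by a Jordan block of size $m$ are of the form $\frac{t^{qj}}{j!}E_q^{(j)}(\lambda t^q)$ for $j=0,\dots,m-1$, and decay follows because the algebraic tail gives $E_q^{(j)}(z)=O(|z|^{-j-1})$, so each term is $O(t^{-q})$; you gesture at this with the remark about $\partial_\lambda^j E_q$, but the displayed criterion contradicts it and must be corrected. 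Second, the sector bookkeeping in the asymptotic expansion is slightly off: the exponential term $\frac{1}{q}\exp(z^{1/q})$ is present in the expansion on a sector $|\arg z|\leq\mu$ with $\frac{q\pi}{2}<\mu<\min(\pi,q\pi)$, not only for $|\arg z|\leq\frac{q\pi}{2}$; what matters is that $\Re(z^{1/q})$ changes sign exactly at $|\arg z|=\frac{q\pi}{2}$, so the exponential dominates inside that angle and is itself decaying outside it. You should also dispose of the degenerate eigenvalue $\lambda=0$ explicitly (it violates condition (\ref{eq.lambda.spec}) and yields non-decaying constant modes), and note that for complex eigenvalues of the real matrix $A$ the ``offending initial condition'' in the necessity direction is taken as the real or imaginary part of an eigenvector. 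With these fixes the argument is complete.
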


The following result can be easily shown using basic mathematical tools.

\begin{lemma}
\label{lem.lambda}
Let $q\in(0,1)$. The complex number $\lambda$ satisfies
$$|\arg(\lambda)|>\frac{q\pi}{2}$$ if and only if one of the following hold:
\begin{itemize}
  \item[(i)] $\Re(\lambda)<0$;
  \item[(ii)] $\Re(\lambda)\geq 0$ and $ |\Im(\lambda)|>\Re(\lambda)\tan\frac{q\pi}{2}$.
\end{itemize}
\end{lemma}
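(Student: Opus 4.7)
The plan is a direct case analysis based on the sign of $\Re(\lambda)$, using the fact that for $q\in(0,1)$ we have $\tfrac{q\pi}{2}\in\bigl(0,\tfrac{\pi}{2}\bigr)$, so the set $\{\lambda\in\mathbb{C}\setminus\{0\}:|\arg(\lambda)|>\tfrac{q\pi}{2}\}$ is the complement of a sector about the positive real axis whose half-angle is strictly less than $\pi/2$.

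First I would dispose of case (i): if $\Re(\lambda)<0$, then $|\arg(\lambda)|>\pi/2>\tfrac{q\pi}{2}$, so the implication from (i) to the stated angular condition is immediate. Conversely, any $\lambda$ with $\Re(\lambda)<0$ is accounted for by (i), so from now on I may restrict attention to $\Re(\lambda)\geq 0$.

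Next I would treat case (ii), writing $\lambda=a+ib$ with $a\geq 0$. If $a=0$ and $b\neq 0$, then $|\arg(\lambda)|=\pi/2>\tfrac{q\pi}{2}$ and simultaneously $|b|>0=a\tan\tfrac{q\pi}{2}$, so both sides of the equivalence hold. If $a>0$, then $\arg(\lambda)\in\bigl(-\tfrac{\pi}{2},\tfrac{\pi}{2}\bigr)$ and hence $\tan|\arg(\lambda)|=|b|/a$; since $\tan$ is strictly increasing on $\bigl(0,\tfrac{\pi}{2}\bigr)$ and both $|\arg(\lambda)|$ and $\tfrac{q\pi}{2}$ lie in $\bigl[0,\tfrac{\pi}{2}\bigr)$, the inequality $|\arg(\lambda)|>\tfrac{q\pi}{2}$ is equivalent to $|b|/a>\tan\tfrac{q\pi}{2}$, i.e.\ to $|\Im(\lambda)|>\Re(\lambda)\tan\tfrac{q\pi}{2}$.

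Combining the two cases gives both directions of the equivalence. There is no serious obstacle here; the only point that requires a little care is the boundary case $\Re(\lambda)=0$, which must be explicitly checked so that the statement of (ii) (with non-strict inequality $\Re(\lambda)\geq 0$) is covered, and the tacit assumption $\lambda\neq 0$ (needed for $\arg(\lambda)$ to be defined) should be noted at the outset.
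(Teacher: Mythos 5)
Your proof is correct and complete: the case split on the sign of $\Re(\lambda)$, the use of $\tan|\arg(\lambda)|=|\Im(\lambda)|/\Re(\lambda)$ together with the monotonicity of $\tan$ on $\bigl[0,\tfrac{\pi}{2}\bigr)$, and the explicit handling of the boundary case $\Re(\lambda)=0$ and of the tacit assumption $\lambda\neq 0$ are exactly what is needed. The paper states this lemma without proof (remarking only that it ``can be easily shown using basic mathematical tools''), and your argument is the standard elementary one that the author is alluding to.
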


\begin{remark}
\label{rem.stab.comp}
For the integer order system $\dot{\bold{x}}=A \bold{x}$, the null solution is asymptotically stable if and only if condition (i) from Lemma \ref{lem.lambda} is satisfied for any $\lambda\in\sigma(A)$. Lemma \ref{lem.lambda} shows that in the case of linear fractional-order systems, the conditions for the asymptotic stability of the null solution become more relaxed than in the integer-order case, due to the alternative provided by (ii). It is also worth noting that if the matrix $A$ does not have positive real eigenvalues, it is possible to choose a fractional order $q$ such that the null solution of $^cD^q \bold{x}=A \bold{x}$ is asymptotically stable. The existence of at least one positive real root of $A$ guarantees the instability of the null solution for any fractional order $q\in(0,1]$.
\end{remark}

In the following, a general result for the stability of a two-dimensional fractional order dynamical system will be explored, using Lemma \ref{lem.lambda}.

\begin{proposition}
\label{prop.stability.2d}
Let $q\in(0,1)$. The two-dimensional linear fractional-order system
$$^cD^q \bold{x}=A \bold{x}\qquad\textrm{where}~~ A\in\mathbb{R}^{2\times 2}$$ is asymptotically stable if and only if one of the following conditions hold:
\begin{itemize}
  \item[(i)] $\tau<0$ and $\delta>0$;
  \item[(ii)] $\tau\geq 0$ and $\ds\delta>\frac{\tau^2}{4}\sec^2{\left(\frac{q\pi}{2}\right)}$;
\end{itemize}
where $\tau=\textrm{trace}(A)$ and $\delta=\det(A)$.
\end{proposition}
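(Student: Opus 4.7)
The plan is to apply Theorem \ref{thm.linear.stab} directly: asymptotic stability is equivalent to $|\arg(\lambda)|>q\pi/2$ for both eigenvalues of $A$, and then to translate this spectral condition into the trace-determinant conditions via Lemma \ref{lem.lambda}. Since $A$ is $2\times 2$, its characteristic polynomial is $\lambda^{2}-\tau\lambda+\delta=0$ with roots $\lambda_{1,2}=\bigl(\tau\pm\sqrt{\tau^{2}-4\delta}\bigr)/2$, so I would split the analysis according to the sign of the discriminant $\Delta=\tau^{2}-4\delta$.

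First I would treat the real-eigenvalue case $\Delta\ge 0$. Here $\lambda_{1,2}\in\mathbb{R}$, and the condition $|\arg(\lambda)|>q\pi/2$ (with $q\in(0,1)$) forces each $\lambda_i$ to be strictly negative; by standard sign-of-roots arguments this happens exactly when $\tau<0$ and $\delta>0$. I would note that this case is consistent with hypothesis (i) of the proposition (and that it is incompatible with $\tau\ge 0$, hence contributes nothing to case (ii)).

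Next I would handle the complex case $\Delta<0$, writing $\lambda_{1,2}=\tau/2\pm i\sqrt{4\delta-\tau^{2}}/2$, so that $\Re(\lambda)=\tau/2$ and $|\Im(\lambda)|=\sqrt{4\delta-\tau^{2}}/2$. Applying Lemma \ref{lem.lambda}: if $\tau<0$ then alternative (i) of the lemma is automatically satisfied (and indeed $\delta>\tau^{2}/4>0$), recovering hypothesis (i) again. If $\tau\ge 0$ then alternative (ii) of the lemma gives $\sqrt{4\delta-\tau^{2}}>\tau\tan(q\pi/2)$, and since both sides are nonnegative I can square to obtain $4\delta-\tau^{2}>\tau^{2}\tan^{2}(q\pi/2)$, i.e.\ $\delta>\tfrac{\tau^{2}}{4}\sec^{2}(q\pi/2)$, matching hypothesis (ii).

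Finally I would assemble the equivalences: for $\tau<0$ the two sub-analyses collapse to $\delta>0$, yielding (i); for $\tau\ge 0$ the real-eigenvalue branch is ruled out (one root would be $\ge\tau/2\ge 0$, violating Lemma \ref{lem.lambda}), so stability forces $\Delta<0$, and the inequality $\delta>\tfrac{\tau^{2}}{4}\sec^{2}(q\pi/2)$ already implies $\Delta<0$ because $\sec^{2}(q\pi/2)\ge 1$; this gives (ii). The only subtlety is the squaring step, which is legitimate precisely because of the sign assumption $\tau\ge 0$ that distinguishes cases (i) and (ii)—this is the one place where one must be careful not to lose or introduce spurious solutions.
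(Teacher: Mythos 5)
Your proposal is correct and follows essentially the same route as the paper: apply Theorem \ref{thm.linear.stab} together with Lemma \ref{lem.lambda}, split on the sign of the discriminant $\tau^2-4\delta$, and in the complex case square the inequality $\sqrt{4\delta-\tau^2}>\tau\tan(q\pi/2)$ (valid since $\tau\geq 0$) to obtain $\delta>\frac{\tau^2}{4}\sec^2\left(\frac{q\pi}{2}\right)$. The only difference is cosmetic (you place the repeated-root case $\tau^2-4\delta=0$ in the real branch rather than the complex one, which if anything handles that boundary slightly more cleanly), so no changes are needed.
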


\begin{proof}
The eigenvalues $\lambda_{1,2}$ of the matrix $A$ satisfy the characteristic equation:
$$\lambda^2-\tau\lambda+\delta=0.$$
We have the following cases:

\noindent 1. $\tau^2-4\delta>0$. In this case, the eigenvalues are $$\lambda_{\pm}=\frac{1}{2}(\tau\pm\sqrt{\tau^2-4\delta})\in\mathbb{R}.$$ Since $\Im(\lambda_+)=\Im(\lambda_-)=0$, it can be easily seen that condition (ii) from Lemma \ref{lem.lambda} cannot be satisfied. However, condition (i) from Lemma \ref{lem.lambda} is equivalent in this case to $\lambda_+<0$, or equivalently
$$\tau<0\quad\textrm{and}\quad0<\delta\leq \frac{\tau^2}{4}.$$

\noindent 2. $\tau^2-4\delta\leq 0$. In this case, the eigenvalues are $$\lambda_{\pm}=\frac{1}{2}(\tau\pm i\sqrt{4\delta-\tau^2})\in\mathbb{C}\setminus\mathbb{R}.$$
In this case, Condition (i) from Lemma \ref{lem.lambda} is equivalent to $$\tau<0\quad\textrm{and}\quad\delta> \frac{\tau^2}{4}.$$
On the other hand, condition (ii) from Lemma \ref{lem.lambda} is equivalent to
$$\tau\geq 0 \quad\textrm{and}\quad \sqrt{4\delta-\tau^2}>\tau\tan\frac{q\pi}{2},$$
or equivalently
$$\tau\geq 0 \quad\textrm{and}\quad 4\delta>\tau^2\sec^2\left(\frac{q\pi}{2}\right).$$
\end{proof}

\begin{remark}\label{rem.equiv}
The two conditions (i) and (ii) from Proposition \ref{prop.stability.2d} can be replaced by the equivalent condition:
$$\delta>0\qquad\textrm{and}\qquad \frac{\tau}{\sqrt{\delta}}<2\cos\left(\frac{q\pi}{2}\right).$$
\end{remark}

\section{The two-dimensional fractional-order Hindmarsh-Rose model}

\subsection{Model description and equilibrium states}

In this section, we will focus on the fast subsystem, considering the following two-dimensional fractional-order Hindmarsh-Rose model:
\begin{equation}\label{sys.2DFHR}
\left\{
  \begin{array}{ll}
    ^cD^q x = y-F(x)+I \\
    ^cD^q y = G(x)-y
  \end{array}
\right.
\end{equation}
where $x$ denotes the cell membrane potential and $y$ represents a recovery variable, while $I$ denotes the external stimulus. The fractional order of the system (\ref{sys.2DFHR}) is $q\in(0,1)$. The following assumptions are considered for the functions $F$ and $G$ (see \cite{Hindmarsh-Rose-2}):
\begin{itemize}
\item[i.] $F$ is cubic and $F(x)\rightarrow\infty$ as $x\rightarrow\infty$;
\item[ii.] $G$ is quadratic;
\item[iii.] both $F$ and $G$ have a local maximum value at $x=0$ and $F(0)=0$.
\end{itemize}
With these conditions, the general forms of $F$ and $G$ are
\begin{align*}
F(x)&=ax^3-bx^2\\
G(x)&=c-dx^2
\end{align*}
where $a,b,c,d$ are positive constants. Usually, it is assumed that $a=c=1$.

The equilibrium states of system (\ref{sys.2DFHR}) satisfy the following equations:
\begin{equation}\left\{
    \begin{array}{ll}
      y-F(x)+I=0 \\
      G(x)-y=0
    \end{array}
  \right.
\end{equation}
Eliminating $y$ from this system ($y=G(x)$), we obtain $F(x)-G(y)=I$ which can be written in the form:
\begin{equation}x^3-px^2=r\end{equation}
where $\ds p=\frac{b-d}{a}$ and $\ds r=\frac{I+c}{a}$.


Denoting $h(x)=x^3-p x^2$, the roots of its derivative
$h'(x)=x(3x-2p)$ are $0$ and  $\ds\frac{2p}{3}$. We denote these roots as follows:
$$\ds\alpha_1=\min\left\{0,\frac{2p}{3}\right\}\qquad\textrm{and}\qquad\ds\alpha_2=\max\left\{0,\frac{2p}{3}\right\}.$$
Hence, the function $h$ has a local maximum at $\alpha_1$ and a local minimum at $\alpha_2$, where $h(\alpha_1)>h(\alpha_2)$. We consider the following bijective functions:
\begin{itemize}
\item $h_1:(-\infty,\alpha_1)\rightarrow (-\infty,h(\alpha_1))$, the restriction of $h$ to $(-\infty,\alpha_1)$, i.e. $h_1=h|_{(-\infty,\alpha_1)}$;
\item $h_2:[\alpha_1,\alpha_2]\rightarrow [h(\alpha_2),h(\alpha_1)]$, the restriction of $h$ to $[\alpha_1,\alpha_2]$, i.e. $h_2=h|_{[\alpha_1,\alpha_2]}$;
\item $h_3:(\alpha_2,\infty)\rightarrow (h(\alpha_2),\infty)$, the restriction of $h$ to $(\alpha_2,\infty)$, i.e. $h_3=h|_{(\alpha_2,\infty)}$;
\end{itemize}
The functions $h_1$ and $h_3$ are strictly increasing, while the function $h_2$ is decreasing.

Hence, we have three branches of equilibrium states for system (\ref{sys.2DFHR}), with respect to the parameter $r$:
\begin{itemize}
\item For $r\in(-\infty,h(\alpha_1))$, we have the branch of equilibrium states $E_1(r)=(x_1^\star(r),G(x_1^\star(r)))$, with $x_1^\star(r)=h_1^{-1}(r)<\alpha_1$.
\item For $r\in[h(\alpha_2),h(\alpha_1)]$, we have the branch of equilibrium states $E_2(r)=(x_2^\star(r),G(x_2^\star(r)))$, with $x_2^\star(r)=h_2^{-1}(r)\in[\alpha_1,\alpha_2]$.
\item For $r\in(h(\alpha_2),\infty)$, we have the branch of equilibrium states $E_3(r)=(x_3^\star(r),G(x_3^\star(r)))$, with $x_3^\star(r)=h_3^{-1}(r)>\alpha_2$.
\end{itemize}

\begin{remark}
The three branches of equilibrium states coexist if and only if $r\in(h(\alpha_2),h(\alpha_1))$, meaning that for every value of $r$ in the interval $(h(\alpha_2),h(\alpha_1))$, there are exactly three equilibrium states of system (\ref{sys.2DFHR}). On the other hand, for every $r$ outside the interval $[h(\alpha_2),h(\alpha_1)]$ there is a unique equilibrium state  of system (\ref{sys.2DFHR}).
\end{remark}

\subsection{Stability of equilibrium states}

In this section, we will analyze the stability of the equilibrium states of system (\ref{sys.2DFHR}), based on the results presented in Proposition \ref{prop.stability.2d} and Remark \ref{rem.equiv}.

The jacobian matrix of (\ref{sys.2DFHR}) at an equilibrium state $E^\star=(x^\star,G(x^\star))$ is
$$A(x^\star)=\left(
                       \begin{array}{cc}
                         -F'(x^\star) & 1 \\
                         G'(x^\star) & -1 \\
                       \end{array}
                     \right)
$$
and its trace and determinant are given by
\begin{align*}
\tau(x^\star)&=-F'(x^\star)-1=-3a(x^\star)^2+2bx^\star-1\\
\delta(x^\star)&=F'(x^\star)-G'(x^\star)=ah'(x^\star)=ax^\star(3x^\star-2p)
\end{align*}

\begin{remark}\label{rem.tau.delta}
We will now shortly analyze the sign of the trace $\tau(x)$ and the determinant $\delta(x)$.

If $b^2<3a$ then $\tau(x)$ does not have real roots and therefore $\tau(x)=-3ax^2+2bx-1<0$ for any $x\in\mathbb{R}$.

On the other hand, if $b^2\geq 3a$ then $\tau(x)=-3ax^2+2bx-1$ has positive real roots
$$\ds\gamma_1=\frac{b-\sqrt{b^2-3a}}{3a}\qquad\textrm{and}\qquad\ds\gamma_2=\frac{b+\sqrt{b^2-3a}}{3a}.$$
In this case, $\tau(x)$ will be positive for $x\in(\gamma_1,\gamma_2)$ and negative otherwise.

Since $\alpha_1$ and $\alpha_2$ are the roots of $h'$, it can be easily seen that the determinant $\delta(x)$ is positive, whenever $x<\alpha_1$ or $x>\alpha_2$, and negative if $x\in(\alpha_1,\alpha_2)$. From Proposition \ref{prop.stability.2d}, it easily follows that if $\delta(x^\star)\leq 0$, the equilibrium state $(x^\star,G(x^\star))$ cannot be asymptotically stable.
\end{remark}

\begin{proposition}\label{prop.stab.equi}
Regarding the stability of the equilibrium states, the following results holds:
\begin{itemize}
\item[(a)] The equilibrium states belonging to the first branch $E_1(r)$, where $r\in(-\infty,h(\alpha_1))$, are asymptotically stable.
\item[(b)] The equilibrium states belonging to the second branch $E_2(r)$, where $r\in(h(\alpha_2),h(\alpha_1))$, are unstable.
\item[(c.1)] If $b^2\leq 3a$, the equilibrium states belonging to the third branch $E_3(r)$, where $r>h(\alpha_2)$, are asymptotically stable.
\item[(c.2)] If $b^2>3a$, the equilibrium states belonging to the third branch $E_3(r)$, where $r>h(\alpha_2)$, are asymptotically stable if and only if one of the following cases hold:
\begin{itemize}
\item[1.]  $\ds p\leq\frac{3\gamma_1}{2}$ and $r\in (h(\alpha_2),h(\gamma_1))$;
\item[2.]  $\ds p<\frac{3\gamma_2}{2}$ and $r\in  (h(\gamma_2),\infty)$;
\item[3.] $\ds p\geq \frac{3\gamma_2}{2}$;
\item[4.]  $\ds p\leq\frac{3\gamma_1}{2}$, $r\in [h(\gamma_1),h(\gamma_2)]$ and $\ds q<\frac{2}{\pi}\arccos \left(\frac{\tau(h_3^{-1}(r))}{2\sqrt{\delta(h_3^{-1}(r))}}\right)$;
\item[5.] $\ds \frac{3\gamma_1}{2}<p<\frac{3\gamma_2}{2}$, $r\in \left(\ds-\frac{4p^3}{27},h(\gamma_2)\right]$ and $\ds q<\frac{2}{\pi}\arccos \left(\frac{\tau(h_3^{-1}(r))}{2\sqrt{\delta(h_3^{-1}(r))}}\right).$
\end{itemize}
\end{itemize}
\end{proposition}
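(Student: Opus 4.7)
The plan is to apply Proposition \ref{prop.stability.2d} (or, equivalently, the compact criterion of Remark \ref{rem.equiv}) to the Jacobian $A(x^\star)$ at each equilibrium, using the sign information for $\tau(x^\star)$ and $\delta(x^\star)$ already collected in Remark \ref{rem.tau.delta}. Since the three branches are parameterized by $x_i^\star = h_i^{-1}(r)$ with $h_1,h_3$ strictly increasing and $h_2$ strictly decreasing, it suffices to locate $x_i^\star$ on the real line relative to the key points $0$, $2p/3$, $\gamma_1$, $\gamma_2$, and then translate the resulting $x$-conditions back to $r$-conditions via $h$.

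Parts (a), (b), (c.1) fall out quickly from the sign analysis. For (a), $x_1^\star < \alpha_1 \leq 0$ makes every term in $\tau(x_1^\star) = -3a(x_1^\star)^2 + 2bx_1^\star - 1$ nonpositive (so $\tau < 0$), while $\delta(x_1^\star) > 0$ since $x_1^\star$ lies outside $[\alpha_1,\alpha_2]$, so Proposition \ref{prop.stability.2d}(i) applies. For (b), $x_2^\star \in (\alpha_1,\alpha_2)$ forces $\delta(x_2^\star) < 0$, which gives two real eigenvalues of opposite sign and hence instability for every admissible $q$. For (c.1), the hypothesis $b^2 \leq 3a$ yields $\tau(x) \leq 0$ on all of $\mathbb{R}$, while $\delta(x_3^\star) > 0$; depending on whether $\tau(x_3^\star) < 0$ or $\tau(x_3^\star) = 0$, condition (i) or condition (ii) (with $\tau = 0$) of Proposition \ref{prop.stability.2d} closes the case.

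The substantive work is in (c.2). Under $b^2 > 3a$ one has $\tau(x) < 0$ on $(-\infty,\gamma_1) \cup (\gamma_2,\infty)$ and $\tau(x) \geq 0$ on $[\gamma_1,\gamma_2]$, while on the third branch $x_3^\star > \alpha_2 = \max\{0, 2p/3\}$. The parameter $p$ therefore splits into three regimes according to the position of $\alpha_2$ relative to $\gamma_1$ and $\gamma_2$: namely $p \leq 3\gamma_1/2$ (so $\alpha_2 \leq \gamma_1$), $3\gamma_1/2 < p < 3\gamma_2/2$ (so $\gamma_1 < \alpha_2 < \gamma_2$), and $p \geq 3\gamma_2/2$ (so $\alpha_2 \geq \gamma_2$). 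In each regime, strict monotonicity of $h_3$ converts the location of $x_3^\star$ relative to $\gamma_1,\gamma_2$ into the stated intervals for $r$, using the identity $h(2p/3) = -4p^3/27$ to recognize the left endpoint appearing in case 5. Where $\tau(x_3^\star) < 0$ (cases 1, 2, 3), Proposition \ref{prop.stability.2d}(i) yields unconditional asymptotic stability; where $\tau(x_3^\star) \geq 0$ (cases 4 and 5), Remark \ref{rem.equiv} supplies the sharp inequality $\tau/\sqrt{\delta} < 2\cos(q\pi/2)$, which inverts to the stated threshold $q < \frac{2}{\pi}\arccos\bigl(\tau(x_3^\star)/(2\sqrt{\delta(x_3^\star)})\bigr)$.

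The main obstacle is organizational rather than analytical: one must verify that the five itemized cases exhaust, without overlap, all $(p,r)$ configurations on the third branch. Once one records that $\gamma_1 > 0$, the comparison of $\gamma_1$ with $\alpha_2$ reduces to the single inequality $2p/3 \lessgtr \gamma_1$ regardless of the sign of $p$, after which the partition becomes mechanical and each subcase is closed by a direct application of Proposition \ref{prop.stability.2d} or Remark \ref{rem.equiv}.
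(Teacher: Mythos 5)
Your proposal is correct and follows essentially the same route as the paper: sign analysis of $\tau$ and $\delta$ via Remark \ref{rem.tau.delta}, the case split on $p$ relative to $\frac{3\gamma_1}{2}$ and $\frac{3\gamma_2}{2}$ (equivalently, the position of $\alpha_2$ relative to $\gamma_1,\gamma_2$), translation to $r$-intervals by monotonicity of $h_3$, and Remark \ref{rem.equiv} for the $q$-dependent cases 4 and 5. Your added justifications (opposite-sign real eigenvalues in (b), the $\tau=0$ subcase in (c.1)) only make explicit what the paper leaves implicit.
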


\begin{proof}

(a) Let us first consider $r\in(-\infty, h(\alpha_1))$ and the corresponding equilibrium state $E_1(r)$ from the first branch. As $x_1^\star(r)<\alpha_1\leq 0<\gamma_1$, we have from Remark \ref{rem.tau.delta} that $\delta(x_1^\star(r))>0$ and $\tau(x_1^\star(r))<0$. Therefore, based on Proposition \ref{prop.stability.2d}, we obtain that $E_1(r)$ is asymptotically stable.

(b) Let $r\in(h(\alpha_2),h(\alpha_1))$ and the corresponding equilibrium state $E_2(r)$ from the second branch. We know that $x_2^\star(r)\in (\alpha_1,\alpha_2)$, and hence, $\delta(x_2^\star(r))<0$, meaning that $E_2(r)$ is unstable.

(c.1) Let $r\in(h(\alpha_2),\infty)$ and the corresponding equilibrium state $E_3(r)$ from the third branch. As $x_3^\star(r)=h_3^{-1}(r)>\alpha_2$, it follows that $\delta(x_3^\star(r))>0$.

If $b^2\leq 3a$, we have seen that $\tau(x)=-3ax^2+2bx-1$ is always negative, and therefore, $\tau(x_3^\star(r))\leq 0$. Therefore, based on Proposition \ref{prop.stability.2d}, we obtain that $E_3(r)$ is asymptotically stable, for any $r\in(h(\alpha_2),\infty)$.

(c.2) Let $b^2> 3a$. Similarly as in the case (c.1), it follows that $\delta(x_3^\star(r))>0$ for any $r\in(h(\alpha_2),\infty)$.

We first consider $\ds p\leq\frac{3\gamma_1}{2}$. In this case, $\ds\alpha_2=\max\{0,\frac{2p}{3}\}\leq\gamma_1<\gamma_2$. If $x_3^\star(r)=h_3^{-1}(r)<\gamma_1$, which is equivalent to $r<h(\gamma_1)$, we have $\tau(x_3^\star(r))<0$ and we obtain that $E_3(r)$ is asymptotically stable.

Considering $\ds p<\frac{3\gamma_2}{2}$, we have $\ds\alpha_2=\max\{0,\frac{2p}{3}\}< \gamma_2$.
If $x_3^\star(r)=h_3^{-1}(r)>\gamma_2$, which is equivalent to $r>h(\gamma_2)$, we have $\tau(x_3^\star(r))<0$ and we obtain that $E_3(r)$ is asymptotically stable.

If $\ds p\geq\frac{3\gamma_2}{2}$, it is obvious that $\ds\gamma_2\leq \frac{2p}{3}=\alpha_2$, and hence, for any $r>h(\alpha_2)$, we have $x_3^\star(r)=h_3^{-1}(r)>\alpha_2\geq \gamma_2$. Therefore, $\tau(x_3^\star(r))<0$ and we obtain that $E_3(r)$ is asymptotically stable.

The last two cases are actually equivalent to the fact that $x_3^\star(r)=h_3^{-1}(r)\in[\gamma_1,\gamma_2]$ and we obtain that $\tau(x_3^\star(r))\geq 0$. From Proposition \ref{prop.stability.2d} and Remark \ref{prop.stab.equi}, the equilibrium state $E_3(r)$ is asymptotically stable if and only if
$$\frac{\tau(h_3^{-1}(r))}{\sqrt{\delta(h_3^{-1}(r))}}<2\cos\left(\frac{q\pi}{2}\right)$$
which is equivalent to
$$q<\frac{2}{\pi}\arccos \left(\frac{\tau(h_3^{-1}(r))}{2\sqrt{\delta(h_3^{-1}(r))}}\right).$$
The proof is now complete.
\end{proof}

\begin{remark}
It is important to notice that if we exclude the last two cases from (c.2) in Proposition \ref{prop.stability.2d}, we obtain necessary and sufficient conditions for the asymptotic stability of the equilibrium states, regardless of the fractional order $q$. Moreover, these conditions correspond to the asymptotic stability of the equilibrium states in the framework of the classical integer order system. Conditions (c.2.4) and (c.2.5) specifically correspond to the fractional order case, and they show that it is possible to stabilize the equilibrium states from the branch $E_3(r)$ by a suitable choice of the fractional order $q$.
\end{remark}

\subsection{Remarks on Hopf bifurcation phenomena}

Concerning the bifurcation phenomena occurring in fractional-order dynamical systems, very few results are known at this moment. The recent paper \cite{ElSaka} attempts to formulate conditions for the Hopf bifurcation, based on observations arising from numerical simulations. However, the complete characterization of the Hopf bifurcation and the stability of the resulting limit cycle is still an open question.

As it can be seen from Proposition \ref{prop.stab.equi}, the two main parameters that characterize the stability and bifurcation parameters in system (\ref{sys.2DFHR}) are $r=\ds\frac{I+c}{a}$, which is determined by the external stimulus, and the fractional order $q$ of the system.

Based on \cite{ElSaka} and the asymptotic stability results presented in Proposition \ref{prop.stab.equi}, it may be concluded that Hopf bifurcations can take place in (\ref{sys.2DFHR}), only on the third branch of equilibria, when $b^2>3a$ and the fractional order $q$ reaches the critical value \begin{equation}\label{eq.q.critical.2d}\ds q^\star(r)=\frac{2}{\pi}\arccos \left(\frac{\tau(h_3^{-1}(r))}{2\sqrt{\delta(h_3^{-1}(r))}}\right),
\end{equation}
in one of the following cases
\begin{itemize}
\item $\ds p\leq \frac{3\gamma_1}{2}$ and $r\in [h(\gamma_1),h(\gamma_2)]$;
\item $\ds\frac{3\gamma_1}{2}<p<\frac{3\gamma_2}{2}$ and $r\in \left(\ds-\frac{4p^3}{27},h(\gamma_2)\right]$.
\end{itemize}
In these two cases, taking into account that $q=q^\star(r)$  we have
$$\frac{\tau(h_3^{-1}(r))}{2\sqrt{\delta(h_3^{-1}(r))}}=\cos\frac{q\pi}{2}.$$
The eigenvalues of the jacobian matrix $A(x_3^\star(r))$ are
\begin{align*}\lambda_\pm&=\frac{1}{2}\left(\tau\pm i\sqrt{4\delta-\tau^2}\right)\\
&=\frac{\tau}{2}\left(1\pm i\sqrt{\frac{4\delta}{\tau^2}-1}\right)\\
&=\frac{\tau}{2}\left(1\pm i\sqrt{\sec^2\left(\frac{q\pi}{2}\right)-1}\right)\\
&=\frac{\tau}{2}\left(1\pm i\tan\left(\frac{q\pi}{2}\right)\right)\\
&=\frac{\tau}{2}\sec\frac{q\pi}{2}\exp\left(\pm i\frac{q\pi}{2}\right)
\end{align*}
Hence, $\ds \arg(\lambda_\pm)=\pm \frac{q\pi}{2}$, and according to \cite{ElSaka}, this corresponds to a Hopf bifurcation in the fractional-order system (\ref{sys.2DFHR}).

\subsection{Numerical example}

For all numerical simulations, the generalization of the Adams-Bashforth-Moulton predictor-corrector method has been used \cite{Diethelm}. The drawback of all numerical methods available for fractional-order dynamical systems is that, in order to obtain a reliable estimation of the solution, because of the hereditary nature of the problem, at every iteration step all previous iterations have to be taken into account, and hence, the computational costs are very high if the solution is computed over a large time interval.

We will consider the following values for the parameters appearing in system (\ref{sys.2DFHR}):
$$a=1;\quad b=3;\quad c=1;\quad d=5.$$
These are the reference values given by Hindmarsh and Rose \cite{Hindmarsh-Rose-2}, and used frequently in the literature for numerical simulations.

In this case, we obviously have $b^2>3a$. We compute $\ds p=\frac{b-d}{a}=-2<0$, $\ds r=\frac{I+c}{a}=I+1$ and $\ds \gamma_{1,2}=1\pm\frac{\sqrt{6}}{3}$.

System (\ref{sys.2DFHR}) has three equilibria if and only if $r\in[h(0),h(2p/3)]=[0,1.18519]$. Outside this interval, (\ref{sys.2DFHR}) has a unique equilibrium state.

Based on the previous remarks, a Hopf bifurcation may occur in system (\ref{sys.2DFHR}), in a neighborhood of the equilibrium state $E_3(r)$, if and only if $$r\in[h(\gamma_1),h(\gamma_2)]=[0.07353,12.5931]$$
when the fractional order $q$ exceeds the critical value $q^\star(r)$ given by (\ref{eq.q.critical.2d}). The critical values $q^\star(r)$ belong to the Hopf bifurcation curve represented in Fig. \ref{fig.stabreg.2d}.

\begin{figure}[htbp]
\centering
\includegraphics*[width=0.5\linewidth]{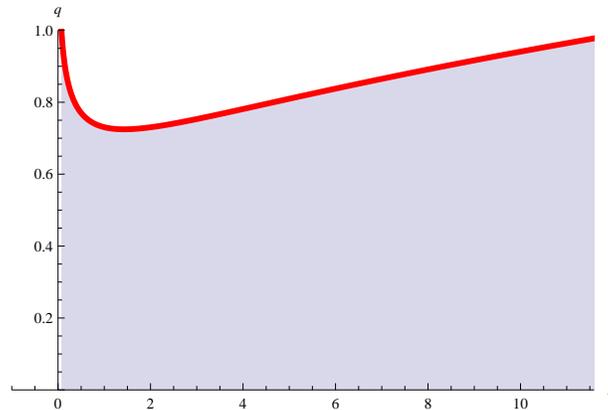}
\caption{The Hopf bifurcation curve (red) in the $(r,q)$-plane, for $r\in[h(\gamma_1),h(\gamma_2)]=[0.07353,12.5931]$ (where $r=I+1$). For a fixed $r$ in this range and for $q<q^\star(r)$ (below the curve), the equilibrium state $E_3(r)$ is asymptotically stable.}
\label{fig.stabreg.2d}
\end{figure}

For example, when $r=1$ (i.e. $I=0$, which corresponds to the resting state), this critical value is $q^\star(1)=0.730585$. When the fractional order $q$ of the system is lower than this critical value, the equilibrium state $E_3(1)$ is asymptotically stable. However, when the fractional order $q$ crosses this critical value, a Hopf bifurcation occurs in system (\ref{sys.2DFHR}), in a neighborhood of the equilibrium state $E_3(1)$. Numerical simulations show that this Hopf bifurcation is supercritical, i.e., it results in the appearance of a stable limit cycle. In Fig. \ref{fig.limit.cycles}, the stable limit cycles corresponding to values of $q\in\{0.75,0.8,0.85,0.9,0.95,1\}$ are shown, together with the three equilibrium states of the system ($E_1(1)$ is asymptotically stable, $E_2(1)$ is a saddle point, and $E_3(1)$ becomes unstable for $q>q^\star(1)=0.730585$.)

\begin{figure}[htbp]
\centering
\includegraphics*[width=0.8\linewidth]{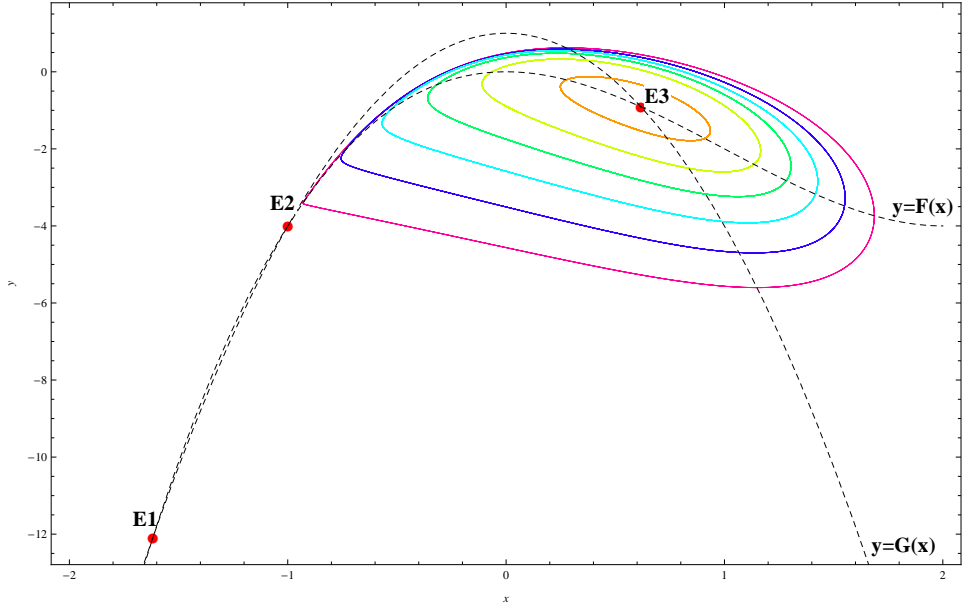}
\caption{The limit cycles corresponding to values of $q\in\{0.75,0.8,0.85,0.9,0.95,1\}$ (when $r=1$, i.e. $I=0$); the limit cycle corresponding to $q=0.75$ is the closest to the unstable equilibrium point $E_3(1)$, while the one corresponding to $q=1$ is the farthest. The equilibrium point $E_1(1)$ is asymptotically stable, and the equilibrium point $E_2(1)$ is a saddle point.}
\label{fig.limit.cycles}
\end{figure}

When $r=4.25$, (i.e. $I=3.25$), the critical value for the Hopf bifurcation is $q^\star(4.25)=0.78823$. Numerical simulations show the appearance of a stable limit cycle in a neighborhood of the equilibrium state $E_3(4.25)$, as the fractional order $q$ crosses this critical value (see Fig. \ref{fig.I325.comparison.2d}). It is worth noting that for this value of the parameter $r$, $E_3(4.25)$ is the unique equilibrium state of system (\ref{sys.2DFHR}).

\begin{figure}[htbp]
\centering
\begin{tabular}{cc}
\includegraphics[width=0.45\textwidth]{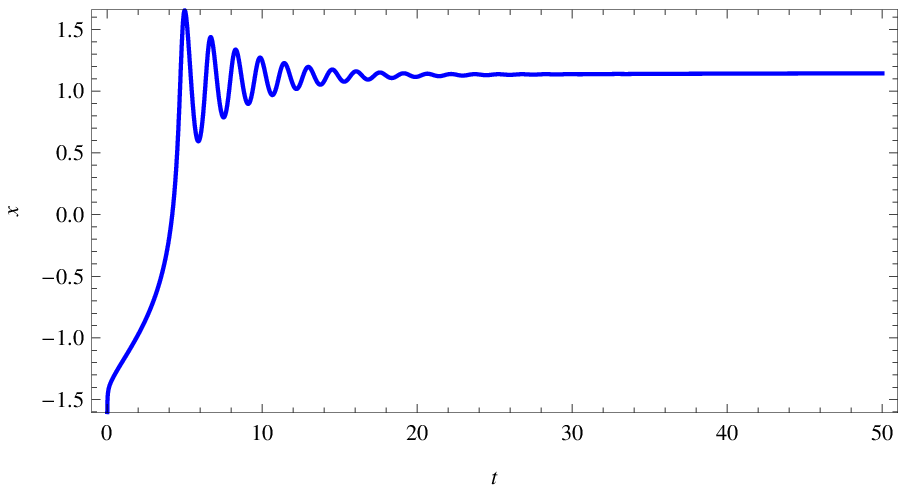} & \includegraphics[width=0.45\textwidth]{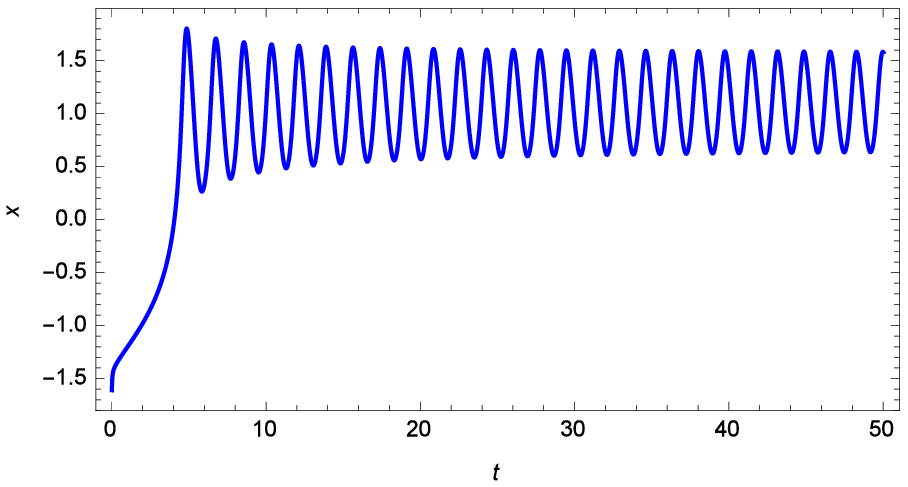}\\
\includegraphics[width=0.45\textwidth]{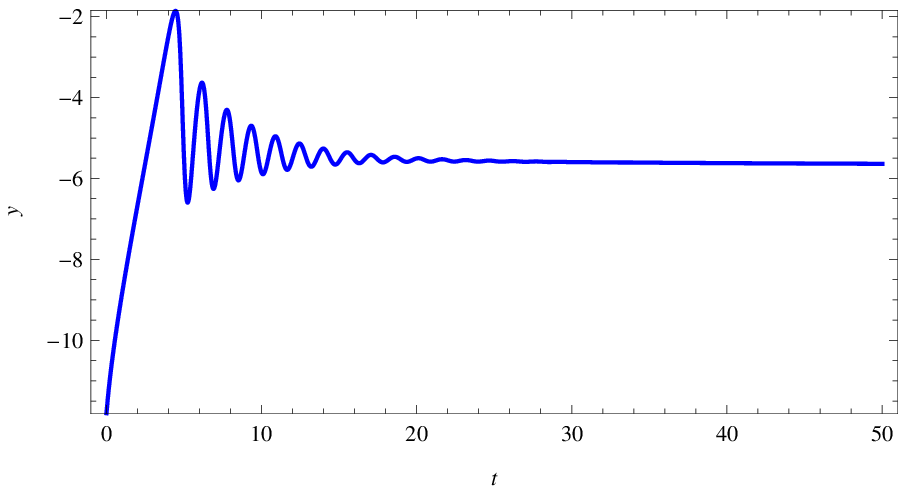} & \includegraphics[width=0.45\textwidth]{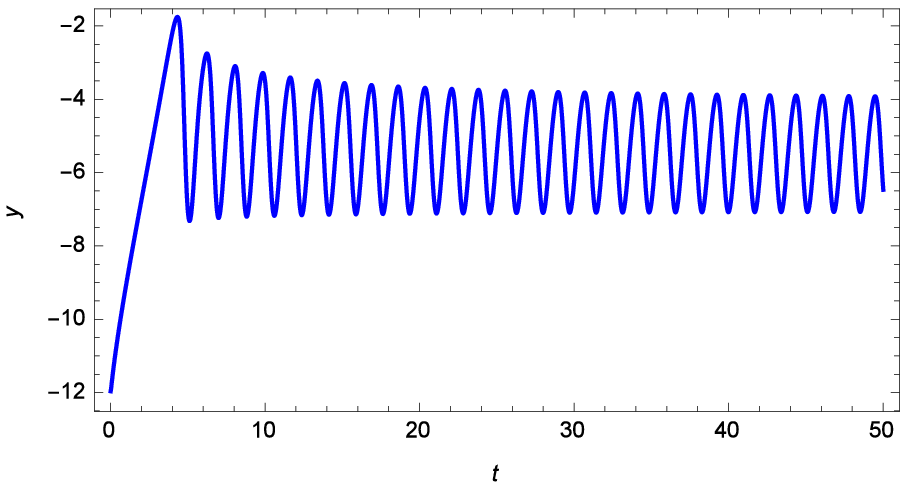}\\
(a) $q=0.75$ & (b) $q=0.8$
\end{tabular}
\caption{Trajectories of system (\ref{sys.2DFHR}) when $r=4.25$, (i.e. $I=3.25$), with initial conditions given by $E_1(1)$ (leftmost resting state corresponding to $I=0$). In the first case, (a) $q=0.75$ (shown on the left), the solution of (\ref{sys.2DFHR} converges to the asymptotically stable equilibrium state $E_3(4.25)$. In the second case, (b) $q=0.8$ (shown on the right), the solution of (\ref{sys.2DFHR} converges to the stable limit cycle which exists in the neighborhood of the unstable equilibrium state $E_3(4.25)$.}
\label{fig.I325.comparison.2d}
\end{figure}

\section{The three-dimensional fractional-order Hindmarsh-Rose neural network model}

\subsection{Model description and theoretical considerations}

Following \cite{Hindmarsh-Rose-2}, we extend the two-dimensional model (\ref{sys.2DFHR}) by adding a third equation, taking into account the slow adaptation current $z$, which will be our bursting variable. The three-dimensional slow-fast fractional-order model that we consider is:
\begin{equation}\label{sys.3DFHR}
\left\{
  \begin{array}{ll}
    ^cD^q x = y-ax^3+bx^2+I-z \\
    ^cD^q y = c-dx^2-y\\
    ^cD^q z =\varepsilon (s(x-x_0)-z)
  \end{array}
\right.
\end{equation}
where $\varepsilon\in(0,1)$ is small, $s>0$ and $x_0$ is the first coordinate of the leftmost equilibrium point of the system without adaptation (\ref{sys.2DFHR}) at the resting state ($I_0=0$, or equivalently, $\ds r=r_0=\frac{c}{a}$), i.e.
$$x_0=h_1^{-1}(r_0)<\alpha_1$$
where we have taken into account the notations from subsection 3.1. Due to this choice of $x_0$, it can be easily seen that $(x_0,G(x_0),0)$ is an equilibrium point of the system with adaptation (\ref{sys.3DFHR}) corresponding to the null external stimulus $I_0=0$, as in \cite{Hindmarsh-Rose-2}.

The equilibrium states of system (\ref{sys.3DFHR}) are given by the following algebraic system:
\begin{equation}
\left\{
  \begin{array}{ll}
    \ds x^3-px^2+\frac{s}{a}(x-x_0)=r \\
    y = c-dx^2=G(x)\\
    z =s(x-x_0)
  \end{array}
\right.
\end{equation}
where the notations  $\ds p=\frac{b-d}{a}$ and $\ds r=\frac{I+c}{a}$ are the same as in the previous section. Considering the cubic polynomial
$$H(x)= x^3-px^2+\frac{s}{a}(x-x_0)=h(x)+\frac{s}{a}(x-x_0)$$
it follows that system (\ref{sys.3DFHR}) has at most three equilibrium states, depending on the number of real roots of the equation
$$H(x)=r.$$

In the rest of this paper, we will consider that the following assumption holds (in accordance with the numerical data):
$$(A)\qquad\ds (b-d)^2< 3as$$

\begin{proposition}\label{prop.unique.eq.3d}
The function $H(x)$ is strictly increasing and there exists a unique branch of equilibrium states $\tilde{E}(r)=(x^\star(r),G(x^\star(r)),s(x^\star(r)-x_0))$, with $r\in\mathbb{R}$, for system (\ref{sys.3DFHR}), where $x^\star(r)=H^{-1}(r)$.
\end{proposition}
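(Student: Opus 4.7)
The plan is to reduce everything to showing that the cubic $H(x)=x^{3}-px^{2}+\frac{s}{a}(x-x_{0})$ is a strictly increasing bijection of $\mathbb{R}$ onto itself; once that is established, the proposition follows by elementary reasoning.

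First I would differentiate to obtain
$$H'(x)=3x^{2}-2px+\frac{s}{a},$$
which is a quadratic in $x$ with positive leading coefficient. To guarantee $H'(x)>0$ for every $x\in\mathbb{R}$, it is enough to check that the discriminant is negative, i.e.\ that $4p^{2}-12\cdot\frac{s}{a}<0$. Substituting $p=\frac{b-d}{a}$, this inequality becomes $(b-d)^{2}<3as$, which is precisely assumption $(A)$. Hence $H'(x)>0$ for all $x\in\mathbb{R}$, so $H$ is strictly increasing on $\mathbb{R}$.

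Next, since $H$ is a cubic with positive leading coefficient, $H(x)\to+\infty$ as $x\to+\infty$ and $H(x)\to-\infty$ as $x\to-\infty$. Being strictly increasing and continuous, $H:\mathbb{R}\to\mathbb{R}$ is therefore a bijection, so the equation $H(x)=r$ admits a unique real solution for every $r\in\mathbb{R}$; denote it $x^{\star}(r)=H^{-1}(r)$. Substituting back into the second and third equations of the equilibrium system defining $y$ and $z$ yields the unique equilibrium $\tilde{E}(r)=(x^{\star}(r),G(x^{\star}(r)),s(x^{\star}(r)-x_{0}))$ for each $r\in\mathbb{R}$, which is the stated branch.

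There is essentially no serious obstacle here: the only nontrivial ingredient is recognizing that assumption $(A)$ is exactly the discriminant condition ensuring $H'>0$. The rest is just the observation that a strictly monotonic continuous surjection of $\mathbb{R}$ is invertible, together with a direct read-off of $y$ and $z$ from the remaining equilibrium equations.
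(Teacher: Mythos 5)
Your proof is correct and follows essentially the same route as the paper: differentiate $H$, observe that assumption $(A)$ is exactly the negativity of the discriminant of $H'$, conclude $H$ is strictly increasing, hence invertible on $\mathbb{R}$, and read off $y$ and $z$ from the remaining equilibrium equations. Your explicit remark that the cubic is also surjective onto $\mathbb{R}$ is a small but welcome addition that the paper leaves implicit.
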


\begin{proof}
It can be easily seen that
$$H'(x)=3x^2-2px+\frac{s}{a}$$
and hence, due to assumption (A), we obtain that the discriminant is
$$\Delta=4\left(p^2-3\frac{s}{a}\right)=\frac{4}{a^2}\left[(b-d)^2-3as\right]<0.$$
We deduce that $H'$ is strictly positive, and the cubic polynomial $H$ is strictly increasing (and invertible) on $\mathbb{R}$, so it has a unique real root $x^\star(r)=H^{-1}(r)$.
\end{proof}

\subsection{Stability analysis}

The jacobian matrix of (\ref{sys.3DFHR}) at the equilibrium state $\tilde{E}(r)$ is
$$\tilde{A}(x^\star(r))=\left(
                       \begin{array}{ccc}
                         -F'(x^\star) & 1 & -1 \\
                         G'(x^\star) & -1 & 0 \\
                         \varepsilon s & 0 & -\varepsilon
                       \end{array}
                     \right)
$$
where $x^\star=x^\star(r)$ is given by Proposition \ref{prop.unique.eq.3d}.

The characteristic polynomial is given by:
$$P(\lambda)=\lambda^3+(F'(x^\star)+1+\varepsilon)\lambda^2+[(1+\varepsilon)F'(x^\star)-G'(x^\star)+(1+s) \varepsilon]\lambda +\varepsilon [F'(x^\star)-G'(x^\star)+s]
$$
Taking into account the notations from subsection 3.2, namely:
\begin{align*}
\tau(x)&=-F'(x)-1=-3ax^2+2bx-1\\
\delta(x)&=F'(x)-G'(x)=ah'(x)=ax(3x-2p)
\end{align*}
the characteristic polynomial can be rewritten as
$$P(\lambda)=\lambda^3+[\varepsilon-\tau(x^\star)]\lambda^2+[\delta(x^\star)-\varepsilon \tau(x^\star)+\varepsilon s]\lambda +\varepsilon [\delta(x^\star)+s]
$$
Applying the Routh-Hurwitz stability criterion, necessary and sufficient conditions for the asymptotic stability of the equilibrium state $\tilde{E}(r)$ can be obtained for the integer-order case $q=1$, and subsequently, for any fractional order $q\in(0,1)$. However, taking into consideration the large number of parameters involved in the expression of the characteristic polynomial $P(\lambda)$, it is difficult to obtain general conditions for asymptotic stability in terms of the parameter $r$, and therefore, the Routh-Hurwitz stability criterion will not be utilized in this paper.

\begin{remark}\label{rem.lambda1}
It is easy to verify that assumption (A) implies $\delta(x)+s> 0$, for any $x\in\mathbb{R}$. Therefore, as the product of the roots of the characteristic polynomial $P(\lambda)$ is $\lambda_1\lambda_2\lambda_3=-\varepsilon(\delta(x^\star)+s)<0$, we deduce that at least one root of the characteristic polynomial is negative. In fact, it is easy to evaluate
$$P(-\varepsilon)=\varepsilon(1-\varepsilon)s>0,$$
and hence, $P(\lambda)$ has at least one root in the interval $(-\infty,-\varepsilon)$.
\end{remark}

\begin{proposition}\label{prop.as.stab.3d.1}
For any $r\leq H(\alpha_1)$ or $\ds r\geq H\left(\frac{2b}{3a}\right)$, the equilibrium state $\tilde{E}(r)$ of system (\ref{sys.3DFHR}) is asymptotically stable (regardless of the fractional order $q$, or the value of the parameter $\varepsilon$).
\end{proposition}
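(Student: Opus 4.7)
The plan is to prove that the Jacobian matrix $\tilde A(x^\star(r))$ is Hurwitz (every eigenvalue has strictly negative real part) under the two hypotheses on $r$. Once this is established, Lemma~\ref{lem.lambda}(i) gives $|\arg(\lambda)|>\pi/2>q\pi/2$ for every eigenvalue $\lambda$ and every $q\in(0,1)$, so Theorem~\ref{thm.linear.stab} together with the fractional linearization theorem of \cite{Li_Ma_2013} delivers the asymptotic stability of $\tilde E(r)$ for all admissible $q$ and $\varepsilon$. In particular, the conclusion is uniform in both $q$ and $\varepsilon$, matching the statement of the proposition.

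First, I would translate the conditions on $r$ into conditions on $x^\star=H^{-1}(r)$ using the strict monotonicity of $H$ (Proposition~\ref{prop.unique.eq.3d}): they become $x^\star\leq\alpha_1$ and $x^\star\geq 2b/(3a)$, respectively. I would then establish the uniform bounds $\tau(x^\star)\leq -1$ and $\delta(x^\star)\geq 0$ on both regions. For $x^\star\leq\alpha_1\leq 0$, the inequality $\tau(x^\star)=-3a(x^\star)^2+2bx^\star-1\leq -1$ is read off term by term, and $\delta(x^\star)\geq 0$ follows from $h'$ being nonnegative on $(-\infty,\alpha_1]$. For $x^\star\geq 2b/(3a)$, the downward parabola $\tau$ has its vertex at $b/(3a)$ and satisfies the direct identity $\tau(2b/(3a))=-1$, so $\tau$ is decreasing past this point and $\tau(x^\star)\leq -1$; for the determinant, one checks that $2b/(3a)\geq \alpha_2=\max\{0,2(b-d)/(3a)\}$ (using $d>0$), placing $x^\star$ on the second increasing branch of $h$ and giving $\delta(x^\star)\geq 0$.

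Next, I would apply the Routh-Hurwitz criterion to the cubic $P(\lambda)=\lambda^3+a_2\lambda^2+a_1\lambda+a_0$ with $a_2=\varepsilon-\tau(x^\star)$, $a_1=\delta(x^\star)-\varepsilon\tau(x^\star)+\varepsilon s$, and $a_0=\varepsilon[\delta(x^\star)+s]$. Positivity of $a_0$, $a_1$, $a_2$ is immediate from the bounds just established together with $\varepsilon,s>0$. The only nontrivial condition is $a_1a_2>a_0$; a short expansion yields
$$a_1a_2-a_0=-\tau\delta+\varepsilon\tau^2-\varepsilon^2\tau-\varepsilon\tau s-\varepsilon s(1-\varepsilon),$$
and I would observe that the single term $-\varepsilon\tau s\geq \varepsilon s$ already strictly exceeds the only negative contribution $\varepsilon s(1-\varepsilon)$, while all remaining terms on the left-hand side are nonnegative.

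I expect the main obstacle to be precisely this nontrivial Routh-Hurwitz inequality: a priori the piece $-\varepsilon s(1-\varepsilon)$ surviving the cancellation could spoil the sign, as indeed happens if $\tau(x^\star)=0$. The key identity that unlocks the argument is the sharp value $\tau(2b/(3a))=-1$, which together with the monotonicity of $\tau$ past its vertex (and the trivial estimate on the left region) yields the uniform slack $-\tau(x^\star)\geq 1$ needed to absorb $\varepsilon s(1-\varepsilon)$ uniformly in $\varepsilon\in(0,1)$ and $s>0$.
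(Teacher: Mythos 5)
Your proof is correct, but it takes a genuinely different route from the paper's. Both arguments begin identically, reducing the hypotheses on $r$ to the uniform bounds $\tau(x^\star)\leq -1$ and $\delta(x^\star)\geq 0$ (your verification of these, including the sharp identity $\tau\left(\frac{2b}{3a}\right)=-1$ and the comparison $\frac{2b}{3a}\geq\alpha_2$, matches the paper's). They then diverge: you apply the Routh--Hurwitz criterion to the cubic and check the single nontrivial inequality $a_1a_2>a_0$, whose expansion and sign analysis you carry out correctly (the slack $-\varepsilon\tau s\geq\varepsilon s>\varepsilon s(1-\varepsilon)$ is exactly what absorbs the lone negative term). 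The paper instead localizes a real root $\lambda_1\in(\tau(x^\star)-\varepsilon,-\varepsilon)$ by showing $P(\tau(x^\star)-\varepsilon)<0$ and $P(-\varepsilon)>0$, then uses Vieta's formulas on the remaining quadratic factor to place $\lambda_2,\lambda_3$ in the left half-plane. Your approach is arguably more standard and self-contained --- and it is a little ironic, since the paper explicitly declines to use Routh--Hurwitz on the grounds of parameter complexity, whereas you show it works cleanly once $\tau\leq-1$ and $\delta\geq 0$ are in hand. The paper's root-localization argument buys something you do not: the real root $\lambda_1\in(-\infty,-\varepsilon)$ introduced in Remark~\ref{rem.lambda1} is reused in Proposition~\ref{prop.as.stab.3d.2} to handle the remaining range of $r$, so the paper's proof doubles as setup for the next result. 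Either way, the conclusion that the Jacobian is Hurwitz, hence $|\arg\lambda|>\frac{\pi}{2}>\frac{q\pi}{2}$ for all $q\in(0,1)$, is the same.
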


\begin{proof}
If $r\leq H(\alpha_1)$, we have $x^\star=x^\star(r)=H^{-1}(r)\leq \alpha_1\leq 0$. On the other hand, if $\ds r\geq H\left(\frac{2b}{3a}\right)$,  we have $\ds x^\star=x^\star(r)=H^{-1}(r)\geq \frac{2b}{3a}\geq \alpha_2$. It can easily be deduced that in both cases, $\delta(x^\star)\geq 0$ and $\tau(x^\star)\leq -1$.

Hence, we have:
\begin{align*}
P(\tau(x^\star)-\varepsilon)&=-\varepsilon\tau(x^\star)^2+[\varepsilon^2+\varepsilon s+\delta(x^\star)]\tau(x^\star)+\varepsilon(1-\varepsilon)s\\
&\leq -\varepsilon\tau(x^\star)^2-[\varepsilon^2+\varepsilon s+\delta(x^\star)]+\varepsilon(1-\varepsilon)s\\
&= -\varepsilon\tau(x^\star)^2-\varepsilon^2(1+s) -\delta(x^\star)< 0
\end{align*}
Therefore, the characteristic polynomial $P(\lambda)$ will have a negative real root $\lambda_1\in (\tau(x^\star)-\varepsilon,-\varepsilon)$. The other two roots verify \begin{align*}
\lambda_2+\lambda_3&=\tau(x^\star)-\varepsilon-\lambda_1<0\\
\lambda_2\lambda_3&=-\ds\frac{\varepsilon[\delta(x^\star)+s]}{\lambda_1}>0
\end{align*}
and hence, they are in the left half-plane. We conclude that the equilibrium state $\tilde{E}(r)$ is asymptotically stable.
\end{proof}

\begin{proposition}\label{prop.as.stab.3d.2}
If $\ds r\in\left(H(\alpha_1),H\left(\frac{2b}{3a}\right)\right)$, the equilibrium state $\tilde{E}(r)$ of system (\ref{sys.3DFHR}) is asymptotically stable if and only if
\begin{equation}
\ds[\tau(x^\star)-\varepsilon-\lambda_1]\sqrt{-\lambda_1}<2\sqrt{\varepsilon[\delta(x^\star)+s]}\cos\frac{q\pi}{2}
\end{equation}
or equivalently,
\begin{equation}\label{cond.as.stab.3d}
q<\frac{2}{\pi}\arccos\left(\min\left(1,\max\left(0,\frac{[\tau(x^\star)-\varepsilon-\lambda_1]\sqrt{-\lambda_1}}{2\sqrt{\varepsilon[\delta(x^\star)+s]}}\right)\right)\right)
\end{equation}
where $\lambda_1=\lambda_1(r)\in(-\infty,-\varepsilon)$ is the smallest real root of the characteristic polynomial $P(\lambda)$ (see Remark \ref{rem.lambda1}).
\end{proposition}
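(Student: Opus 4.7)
The plan is to reduce the cubic characteristic polynomial to a linear factor times a quadratic factor, then apply the one-dimensional and two-dimensional fractional stability criteria separately. From Remark \ref{rem.lambda1}, $P(\lambda)$ already has a known real root $\lambda_1\in(-\infty,-\varepsilon)$, so I can factor $P(\lambda)=(\lambda-\lambda_1)(\lambda^2-\tau'\lambda+\delta')$, and comparing with the given expression of $P$ (equivalently, Vieta's formulas) identifies
$$\tau'=\lambda_2+\lambda_3=\tau(x^\star)-\varepsilon-\lambda_1,\qquad \delta'=\lambda_2\lambda_3=-\frac{\varepsilon[\delta(x^\star)+s]}{\lambda_1}.$$
Assumption (A) guarantees $\delta(x^\star)+s>0$ and, combined with $\lambda_1<0$, it yields $\delta'>0$.

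Next, I invoke Theorem \ref{thm.linear.stab}: $\tilde E(r)$ is asymptotically stable if and only if $|\arg(\lambda_i)|>q\pi/2$ for every eigenvalue $\lambda_i$. Since $\lambda_1<0$ is real negative, $|\arg(\lambda_1)|=\pi$ handles that factor automatically. The remaining eigenvalues $\lambda_2,\lambda_3$ are the roots of a real quadratic with trace $\tau'$ and determinant $\delta'>0$, so Remark \ref{rem.equiv} (which characterises the arg-condition purely in terms of trace and determinant, and therefore applies to any such quadratic, not merely to a concrete matrix) delivers the scalar criterion
$$\frac{\tau'}{\sqrt{\delta'}}<2\cos\frac{q\pi}{2}.$$
Substituting the explicit expressions for $\tau'$ and $\delta'$ and clearing the positive quantity $\sqrt{-\lambda_1}$ produces the first inequality in the statement.

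To obtain the equivalent $\arccos$-form, I solve the resulting inequality for $q$. Setting
$$R(r)=\frac{[\tau(x^\star)-\varepsilon-\lambda_1]\sqrt{-\lambda_1}}{2\sqrt{\varepsilon[\delta(x^\star)+s]}},$$
the condition $R(r)<\cos(q\pi/2)$ is equivalent to $q<(2/\pi)\arccos R(r)$ whenever $R(r)\in[0,1]$; if $R(r)<0$ the condition holds for every $q\in(0,1)$, which matches the $\max(0,\cdot)$ clamp (yielding $\arccos 0=\pi/2$, hence $q<1$); if $R(r)>1$ it fails for every $q\in(0,1)$, which matches the $\min(1,\cdot)$ clamp (yielding $\arccos 1=0$, hence $q<0$, impossible). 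These three cases merge into the single uniform expression (\ref{cond.as.stab.3d}).

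I expect no conceptual obstacle; the main technical care goes into (i) justifying that Remark \ref{rem.equiv} is directly applicable to the root pair of the quadratic factor (its real coefficients and positive constant term are precisely the inputs required there), and (ii) bookkeeping the sign cases that are absorbed into the $\min/\max$ clamping in the $\arccos$-form. The hypothesis $r\in(H(\alpha_1),H(2b/(3a)))$ does not actually enter the derivation; it merely isolates the complementary range not already handled by Proposition \ref{prop.as.stab.3d.1}.
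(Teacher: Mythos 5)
Your proposal is correct and follows essentially the same route as the paper: isolate the real root $\lambda_1<-\varepsilon$ from Remark \ref{rem.lambda1}, use Vieta's formulas to get $\lambda_2+\lambda_3=\tau(x^\star)-\varepsilon-\lambda_1$ and $\lambda_2\lambda_3=-\varepsilon[\delta(x^\star)+s]/\lambda_1>0$, and apply Proposition \ref{prop.stability.2d} together with Remark \ref{rem.equiv} to the remaining quadratic factor. Your additional bookkeeping of the $\min/\max$ clamping in the $\arccos$ form and the observation that the hypothesis on $r$ only delimits the range complementary to Proposition \ref{prop.as.stab.3d.1} are consistent with, and slightly more explicit than, the paper's argument.
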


\begin{proof}
Considering the smallest real root $\lambda_1=\lambda_1(r)\in(-\infty,-\varepsilon)$ of $P(\lambda)$, the other two roots of the characteristic polynomial satisfy
\begin{align*}
\lambda_2+\lambda_3&=\tau(x^\star)-\varepsilon-\lambda_1\\
\lambda_2\lambda_3&=-\ds\frac{\varepsilon[\delta(x^\star)+s]}{\lambda_1}>0
\end{align*}
Following Proposition \ref{prop.stability.2d} and Remark \ref{rem.equiv}, we deduce that $\lambda_2$ and $\lambda_3$ satisfy the asymptotic stability condition $\ds|\arg(\lambda)|>\frac{q\pi}{2}$ if and only if
$$\lambda_2+\lambda_3<2\sqrt{\lambda_2\lambda_3}\cos\frac{q\pi}{2}$$
which is equivalent to condition (\ref{cond.as.stab.3d}).\end{proof}

\subsection{Remarks on Hopf bifurcation phenomena}

Just as in the case of the two-dimensional system, the two main parameters that characterize the stability and bifurcation parameters in system (\ref{sys.3DFHR}) are $r=\ds\frac{I+c}{a}$, which is determined by the external stimulus, and the fractional order $q$ of the system.

Based on \cite{ElSaka} and the asymptotic stability results presented in the previous subsection, it may be concluded that Hopf bifurcations take place in (\ref{sys.3DFHR}) only when $\ds r\in\left(H(\alpha_1),H\left(\frac{2b}{3a}\right)\right)$ and the fractional order $q$ reaches the critical value \begin{equation}\label{eq.q.critical.3d}\ds q^\star(r)=\frac{2}{\pi}\arccos\left(\min\left(1,\max\left(0,\frac{[\tau(x^\star)-\varepsilon-\lambda_1]\sqrt{-\lambda_1}}{2\sqrt{\varepsilon[\delta(x^\star)+s]}}\right)\right)\right),
\end{equation}
excluding the cases when $q^\star(r)\in\{0,1\}$.

\subsection{Numerical example}
We consider the same values for the parameters appearing in system (\ref{sys.3DFHR}), as in the case of the fast system (\ref{sys.2DFHR}), in subsection 3.4.
$$a=1;\quad b=3;\quad c=1;\quad d=5.$$
As in \cite{Hindmarsh-Rose-2}, we consider
$$\varepsilon=0.005;\quad s=4.$$
We have $\ds p=\frac{b-d}{a}=-2<0$ and $\ds r=\frac{I+c}{a}=I+1$. Clearly, assumption (A) holds, and hence, $\tilde{E}(r)$ is the unique branch of equilibrium states of system (\ref{sys.3DFHR}).

We can also compute $\ds\alpha_1=-\frac{4}{3}$, $\ds\alpha_2=0$, $\ds H(\alpha_1)=2.32399$, $\ds H(\alpha_2)=6.47214$ $\ds\gamma_{1,2}=1\pm\frac{\sqrt{6}}{3}$, $\ds H(\gamma_1)=7.27968$, $\ds H(\gamma_2)=26.3313$, $\ds H\left(\frac{2b}{3a}\right)=H(2)=30.4721$.

From Proposition \ref{prop.as.stab.3d.1}, we deduce that $\tilde{E}(r)$ is asymptotically stable for any $r\leq H(\alpha_1)$ or $\ds r\geq H\left(\frac{2b}{3a}\right)$, or equivalently, for any $I\leq 1.32399$ or $I\geq 29.4721$, regardless of the fractional order $q$, or the value of the parameter $\varepsilon$. On the other hand, when $I\in(1.32399,29.4721)$, the equilibrium $\tilde{E}(r)$ looses its stability for certain combinations of the external stimulus $I$ and the fractional order $q$. Fig. \ref{fig.hopf.3d} shows the stability region in the $(I,q)$-plane for the equilibrium $\tilde{E}(r)$, as well as the critical values $q^\star(r)$ given by equation (\ref{eq.q.critical.3d}). Precise details about the dynamic behavior in a neighborhood of the equilibrium $\tilde{E}(r)$ of (\ref{sys.3DFHR}) are given in Table \ref{tab.3d}.

\begin{figure}[htbp]
\centering
\includegraphics*[width=0.5\linewidth]{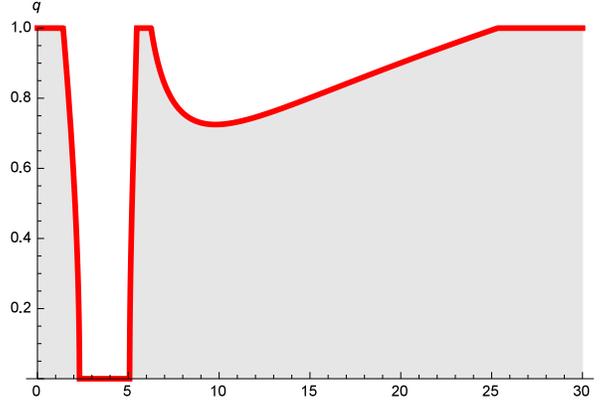}
\caption{The stability region in the $(I,q)$-plane for the 3-dimensional system, for $I\in[0,30]$. For a combination of parameters $(I,q)$ in the gray region, the equilibrium state $\tilde{E}(r)$ is asymptotically stable (with $r=I+1$). The critical values $q^\star(r)$ given by (\ref{eq.q.critical.3d}) are shown in red.}
\label{fig.hopf.3d}
\end{figure}

\begin{table}
\centering
\begin{tabular}{|l|l|}
\hline
$I\in(1.32399, 1.41401$ & $\tilde{E}(r)$  asymptotically stable for any $q\in(0,1)$ \\
\hline
$I\in (1.41401,2.31369)$ & Hopf bifurcation at $q=q^\star(r)$ \\
\hline
$I\in (2.31369,5.07454)$ & $\tilde{E}(r)$  unstable for any $q\in(0,1)$ \\
\hline
$I\in (5.07454,5.46681)$ & Hopf bifurcation at $q=q^\star(r)$ \\
\hline
$I\in (5.46681,6.25616)$ & $\tilde{E}(r)$  asymptotically stable for any $q\in(0,1)$ \\
\hline
$I\in (6.25616,25.3362)$ & Hopf bifurcation at $q=q^\star(r)$ \\
\hline
$I\in(25.3362, 29.4721$ & $\tilde{E}(r)$  asymptotically stable for any $q\in(0,1)$ \\
\hline
\end{tabular}
\caption{Dynamic behavior in a neighborhood of the equilibrium $\tilde{E}(r)$ of (\ref{sys.3DFHR}) when $I\in(1.32399,29.4721)$.}
\label{tab.3d}
\end{table}

The most interesting dynamic behavior in system (\ref{sys.3DFHR}) is observed when $I\in (2.31369,5.07454)$, i.e., when the equilibrium $\tilde{E}(r)$ is unstable for any $q\in(0,1)$. In fact, this is the range for the external stimulus where bursting behavior has been observed by numerical simulations. For $I=3.25$,  the trajectories of system (\ref{sys.3DFHR}), with initial conditions given by $\tilde{E}(1)$ (resting state corresponding to $I=0$) are shown in Fig. \ref{fig.I325.comparison.3d}, for two different values of the fractional order: $q=0.8$ and $q=0.9$. Numerical simulations suggest that as the fractional order $q$ decreases, the number of spikes in individual bursts increases.

\begin{figure}[htbp]
\centering
\begin{tabular}{cc}
\includegraphics[width=0.45\textwidth]{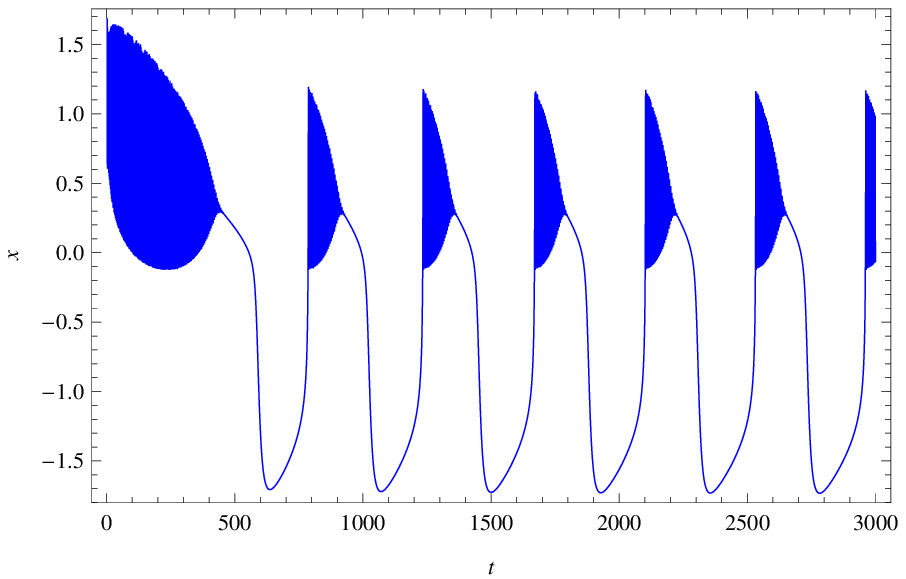} & \includegraphics[width=0.45\textwidth]{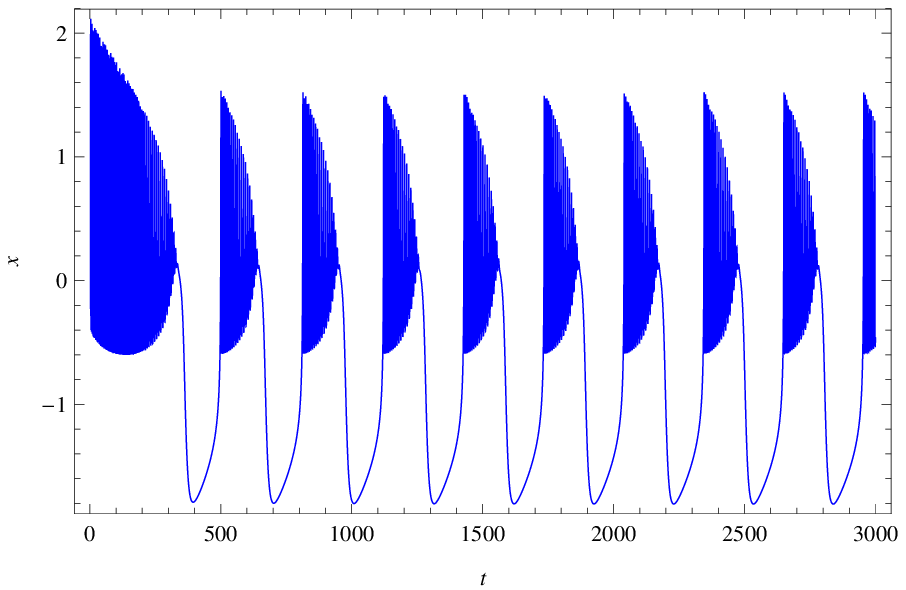} \\
\includegraphics[width=0.45\textwidth]{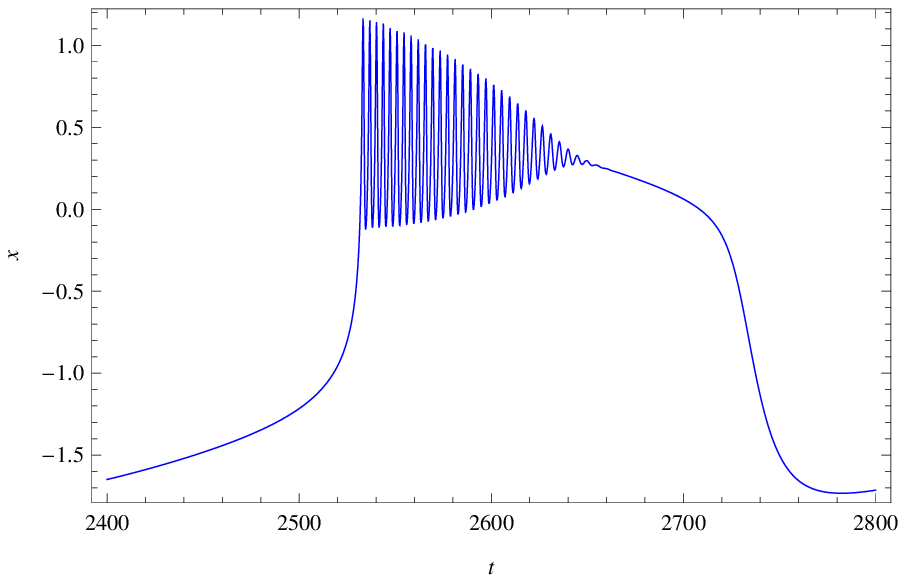} & \includegraphics[width=0.45\textwidth]{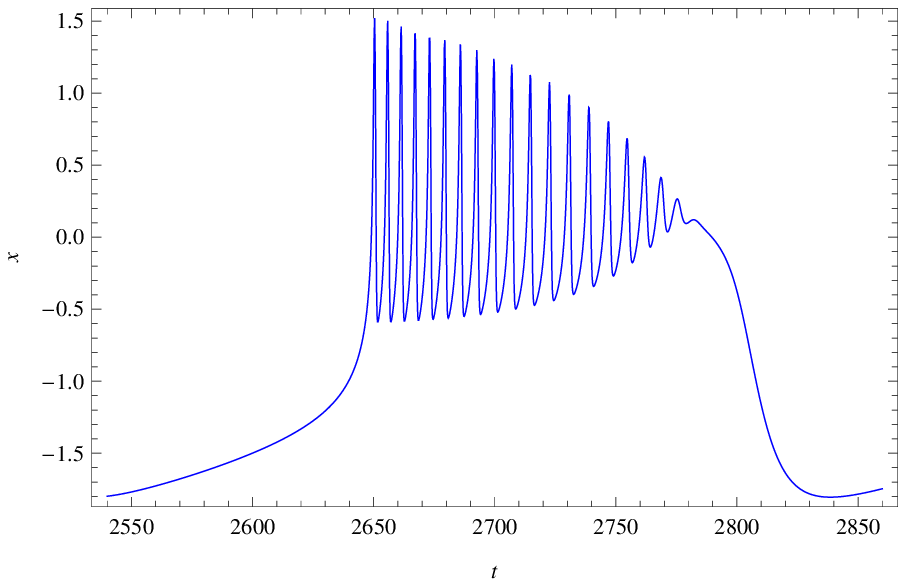}\\
\includegraphics[width=0.45\textwidth]{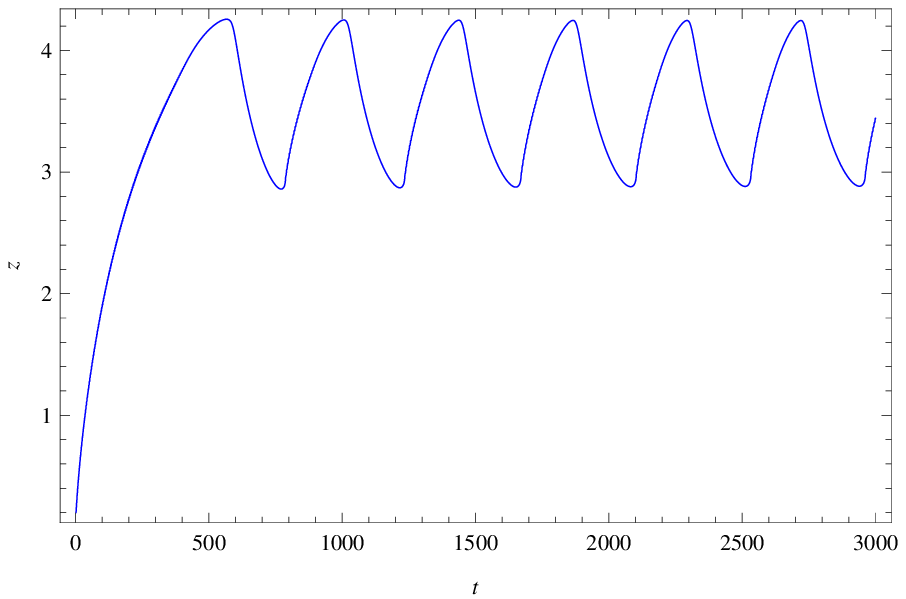} & \includegraphics[width=0.45\textwidth]{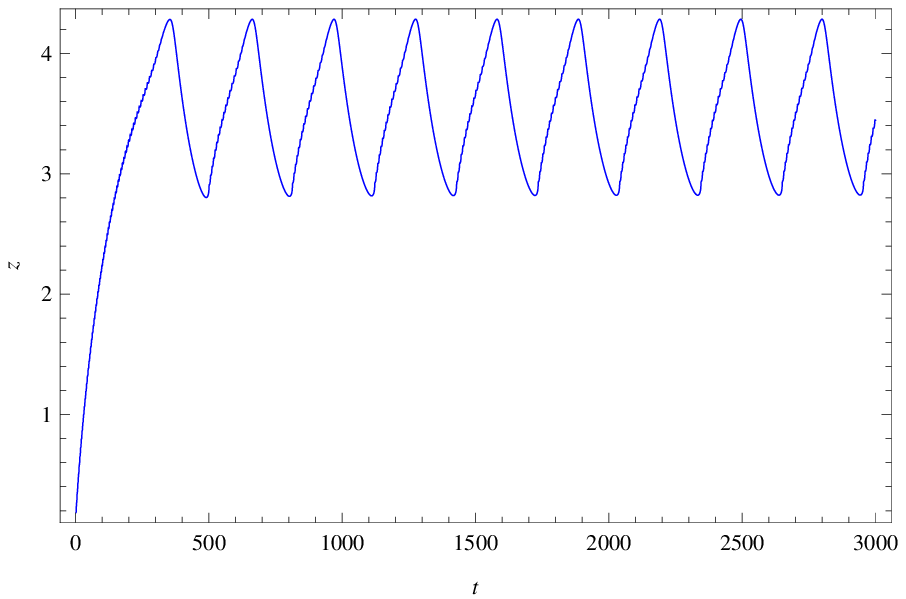} \\
\includegraphics[width=0.45\textwidth]{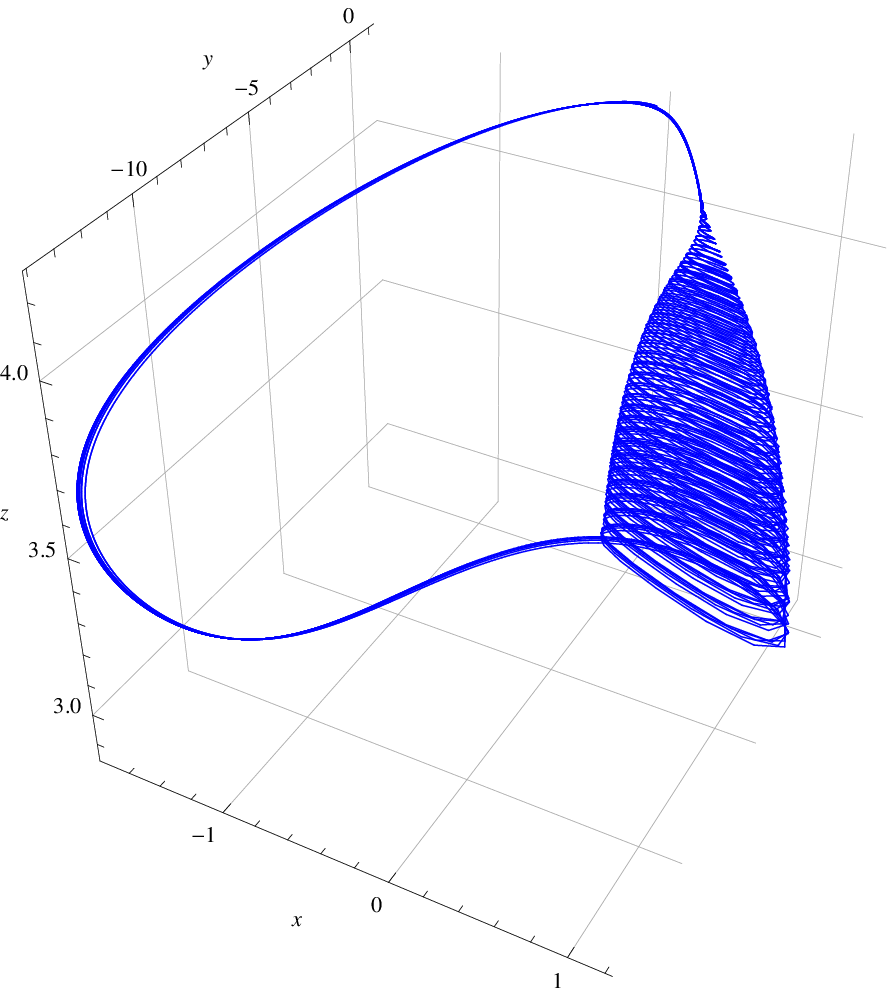} & \includegraphics[width=0.45\textwidth]{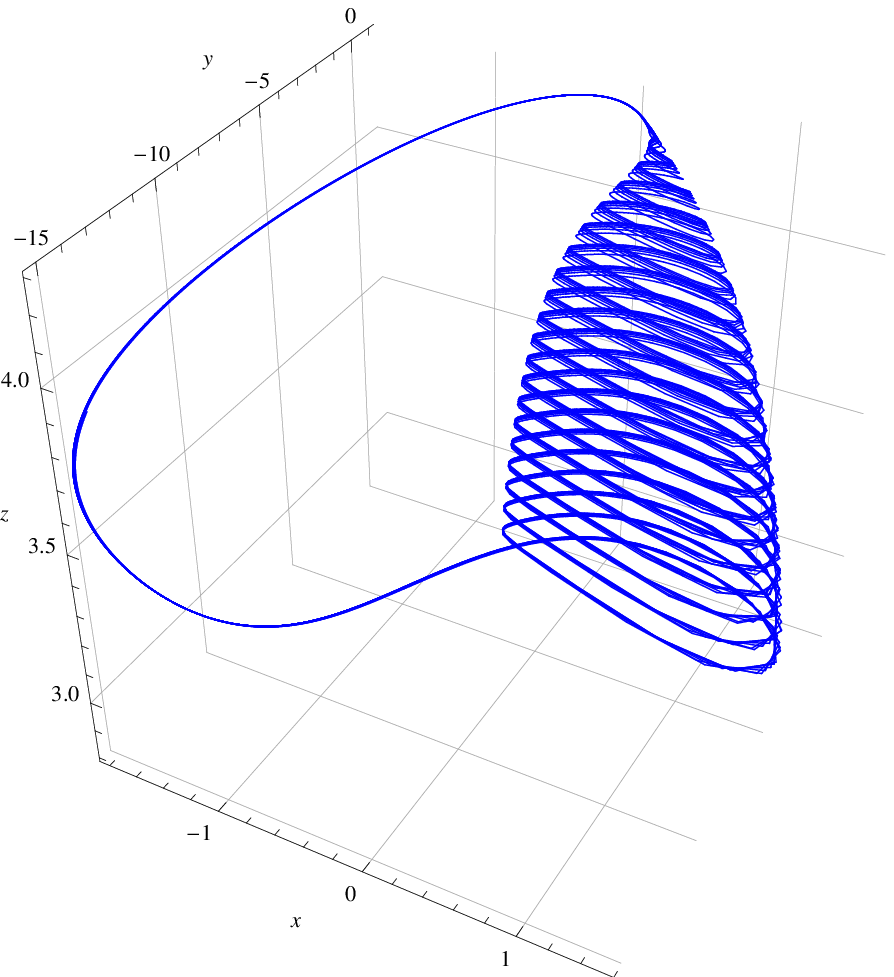} \\
(a) $q=0.8$ & (b) $q=0.9$
\end{tabular}
\caption{Comparison of trajectories of system (\ref{sys.3DFHR}), with initial conditions given by $\tilde{E}(1)$ (resting state corresponding to $I=0$) when $I=3.25$ (i.e. $r=4.25$), for two different values of the fractional order: (a) $q=0.8$ and (b) $q=0.9$.}
\label{fig.I325.comparison.3d}
\end{figure}

\section{Conclusions}

The fractional-order Hindmarsh-Rose models presented in this paper are realistic generalizations of the corresponding integer-order models, taking advantage of the fact that fractional-order derivatives are more precise in the description of dielectric processes and memory properties of membranes. Choosing the fractional order of the systems as bifurcation parameter, a theoretical stability and Hopf bifurcation analysis has been accomplished for the two- and three-dimensional fractional-order Hindmarsh-Rose models, allowing us to gain a better insight into neuronal activities. The theoretical results have been obtained without specifying fixed numerical values for the system parameters. Moreover, numerical simulations reveal rich bursting behavior in the three dimensional slow-fast model, which is consistent with experimental data. It is worth emphasizing that bursting behavior is observed for a total fractional-order of the system $3q\in(2,3)$.

\section*{Acknowledgement}

This work was supported by a grant of the Romanian National Authority for Scientific Research and Innovation, CNCS-UEFISCDI, project number PN-II-RU-TE-2014-4-0270.


\begin{thebibliography}{10}

\bibitem{Anastasio}
T.J. Anastasio, \emph{The fractional-order dynamics of brainstem
  vestibulo-oculomotor neurons}, Biological Cybernetics \textbf{72} (1994),
  no.~1, 69--79.

\bibitem{Corson_2009}
Nathalie Corson and Moulay Aziz-Alaoui, \emph{Asymptotic dynamics of
  hindmarsh-rose neuronal system}, Dynamics of Continuous, Discrete and
  Impulsive Systemes, Series B: Applications and Algorithms (2009), no.~16,
  p--535.

\bibitem{Cottone}
Giulio Cottone, Mario~Di Paola, and Roberta Santoro, \emph{A novel exact
  representation of stationary colored gaussian processes (fractional
  differential approach)}, Journal of Physics A: Mathematical and Theoretical
  \textbf{43} (2010), no.~8, 085002.

\bibitem{Diethelm}
K.~Diethelm, N.J. Ford, and A.D. Freed, \emph{A predictor-corrector approach
  for the numerical solution of fractional differential equations}, Nonlinear
  Dynamics \textbf{29} (2002), no.~1-4, 3--22.

\bibitem{ElSaka}
H.A. El-Saka, E.~Ahmed, M.I. Shehata, and A.M.A. El-Sayed, \emph{On stability,
  persistence, and hopf bifurcation in fractional order dynamical systems},
  Nonlinear Dynamics \textbf{56} (2009), no.~1-2, 121--126.

\bibitem{Engheia}
N~Engheia, \emph{On the role of fractional calculus in electromagnetic theory},
  IEEE Antennas and Propagation Magazine \textbf{39} (1997), no.~4, 35--46.

\bibitem{FitzHugh}
R.~FitzHugh, \emph{Impulses and physiological states in theoretical models of
  nerve membrane}, Biophysical Journal \textbf{1} (1961), 445--466.

\bibitem{Guckenheimer-2011}
J.~Guckenheimer and H.M. Osinga, \emph{The singular limit of a hopf
  bifurcation}, Preprint Bristol Centre for Applied Nonlinear Mathematics
  (2011), no.~1748, 1--24.

\bibitem{Henry_Wearne}
B.I. Henry and S.L. Wearne, \emph{Existence of turing instabilities in a
  two-species fractional reaction-diffusion system}, SIAM Journal on Applied
  Mathematics \textbf{62} (2002), 870--887.

\bibitem{Heymans_Bauwens}
N.~Heymans and J.-C. Bauwens, \emph{Fractal rheological models and fractional
  differential equations for viscoelastic behavior}, Rheologica Acta
  \textbf{33} (1994), 210--219.

\bibitem{Hindmarsh-Rose-1}
J.L. Hindmarsh and R.M. Rose, \emph{A model of the nerve impulse using two
  first-order differential equations}, Nature \textbf{296} (1982), 162--164.

\bibitem{Hindmarsh-Rose-2}
\bysame, \emph{A model of neuronal bursting using three coupled first order
  differential equations}, Proceedings of the Royal Society of London
  \textbf{B221} (1984), 87--102.

\bibitem{Hodgkin-Huxley}
A.~Hodgkin and A.~Huxley, \emph{A quantitative description of membrane current
  and its application to conduction and excitation in nerve}, Journal of
  Physiology \textbf{117} (1952), 500--544.

\bibitem{Ichise_Nagayanagi_Kojima}
M~Ichise, Y~Nagayanagi, and T~Kojima, \emph{An analog simulation of non-integer
  order transfer functions for analysis of electrode processes}, Journal of
  Electroanalytical Chemistry \textbf{33} (1971), 253--265.

\bibitem{Innocenti_2009}
G.~Innocenti and R.~Genesio, \emph{On the dynamics of chaotic spiking-bursting
  transition in the hindmarsh--rose neuron}, Chaos: An Interdisciplinary
  Journal of Nonlinear Science \textbf{19} (2009), no.~2, 023124.

\bibitem{Innocenti_2007}
Giacomo Innocenti, Alice Morelli, Roberto Genesio, and Alessandro Torcini,
  \emph{Dynamical phases of the hindmarsh-rose neuronal model: Studies of the
  transition from bursting to spiking chaos}, Chaos: An Interdisciplinary
  Journal of Nonlinear Science \textbf{17} (2007), no.~4, 043128.

\bibitem{Rinzel}
Rinzel J., \emph{Bursting oscillations in an excitable membrane model},
  Ordinary and Partial Differential Equations. Proceedings of the 8th Dundee
  Conference. (B.D. Sleeman and R.J. Jarvis, eds.), Lecture Notes in
  Mathematics, vol. 1151, Springer-Berlin, 1985, pp.~304--316.

\bibitem{Jun_2014}
Dong Jun, Zhang Guang-jun, Xie Yong, Yao Hong, and Wang Jue, \emph{Dynamic
  behavior analysis of fractional-order hindmarsh--rose neuronal model},
  Cognitive Neurodynamics \textbf{8} (2014), no.~2, 167--175.

\bibitem{Kilbas}
A.A. Kilbas, H.M. Srivastava, and J.J. Trujillo, \emph{Theory and applications
  of fractional differential equations}, Elsevier, 2006.

\bibitem{Lak}
V.~Lakshmikantham, S.~Leela, and J.~Vasundhara Devi, \emph{Theory of fractional
  dynamic systems}, Cambridge Scientific Publishers, 2009.

\bibitem{Li_Ma_2013}
Changpin Li and Yutian Ma, \emph{Fractional dynamical system and its
  linearization theorem}, Nonlinear Dynamics \textbf{71} (2013), no.~4,
  621--633.

\bibitem{Lundstrom}
B.N. Lundstrom, M.H. Higgs, W.J. Spain, and A.L. Fairhall, \emph{Fractional
  differentiation by neocortical pyramidal neurons}, Nature Neuroscience
  \textbf{11} (2008), no.~11, 1335--1342.

\bibitem{Mainardi_1996}
Francesco Mainardi, \emph{Fractional relaxation-oscillation and fractional
  phenomena}, Chaos Solitons Fractals \textbf{7} (1996), no.~9, 1461--1477.

\bibitem{Matignon}
D.~Matignon, \emph{Stability results for fractional differential equations with
  applications to control processing}, Computational Engineering in Systems
  Applications, 1996, pp.~963--968.

\bibitem{Metzler}
Ralf Metzler and Joseph Klafter, \emph{The random walk's guide to anomalous
  diffusion: a fractional dynamics approach}, Physics Reports \textbf{339}
  (2000), no.~1, 1 -- 77.

\bibitem{Morris_Lecar_1981}
Catherine Morris and Harold Lecar, \emph{Voltage oscillations in the barnacle
  giant muscle fiber.}, Biophysical Journal \textbf{35} (1981), no.~1, 193.

\bibitem{Podlubny}
I.~Podlubny, \emph{Fractional differential equations}, Academic Press, 1999.

\bibitem{Shi_2014}
Min Shi and Zaihua Wang, \emph{Abundant bursting patterns of a fractional-order
  morris--lecar neuron model}, Communications in Nonlinear Science and
  Numerical Simulation \textbf{19} (2014), no.~6, 1956--1969.

\bibitem{Shilnikov_2008}
Andrey Shilnikov and Marina Kolomiets, \emph{Methods of the qualitative theory
  for the hindmarsh--rose model: A case study--a tutorial}, International
  Journal of Bifurcation and chaos \textbf{18} (2008), no.~08, 2141--2168.

\bibitem{Storace_2008}
Marco Storace, Daniele Linaro, and Enno de~Lange, \emph{The hindmarsh--rose
  neuron model: bifurcation analysis and piecewise-linear approximations},
  Chaos: An Interdisciplinary Journal of Nonlinear Science \textbf{18} (2008),
  no.~3, 033128.

\bibitem{Sugimoto}
N.~Sugimoto, \emph{Burgers equation with a fractional derivative; hereditary
  effects on nonlinear acoustic waves}, Journal of Fluid Mechanics \textbf{225}
  (1991), 631--653.

\bibitem{Tsuji_2007}
Shigeki Tsuji, Tetsushi Ueta, Hiroshi Kawakami, Hiroshi Fujii, and Kazuyuki
  Aihara, \emph{Bifurcations in two-dimensional hindmarsh--rose type model},
  International Journal of Bifurcation and Chaos \textbf{17} (2007), no.~03,
  985--998.

\bibitem{Min_2012}
Min Xiao, \emph{Stability analysis and hopf-type bifurcation of a fractional
  order hindmarsh-rose neuronal model}, Advances in Neural Networks – ISNN
  2012, Lecture Notes in Computer Science, vol. 7367, Springer Berlin
  Heidelberg, 2012, pp.~217--224.

\bibitem{Min_2013}
\bysame, \emph{Bifurcation control of a fractional order hindmarsh-rose
  neuronal model}, Advances in Neural Networks – ISNN 2013, Lecture Notes in
  Computer Science, vol. 7952, Springer Berlin Heidelberg, 2013, pp.~88--95.

\bibitem{Xie_2014}
Yong Xie, YanMei Kang, Yong Liu, and Ying Wu, \emph{Firing properties and
  synchronization rate in fractional-order hindmarsh-rose model neurons},
  Science China Technological Sciences \textbf{57} (2014), no.~5, 914--922.

\end{thebibliography}

\providecommand{\bysame}{\leavevmode\hbox to3em{\hrulefill}\thinspace}
\providecommand{\MR}{\relax\ifhmode\unskip\space\fi MR }
\providecommand{\MRhref}[2]{%
  \href{http://www.ams.org/mathscinet-getitem?mr=#1}{#2}
}
\providecommand{\href}[2]{#2}

\end{document}